\documentclass[12pt]{amsart}
\headheight=8pt     \topmargin=0pt
\textheight=624pt   \textwidth=432pt
\oddsidemargin=18pt \evensidemargin=18pt

\usepackage{amssymb}
\usepackage{verbatim}
\usepackage{hyperref}
\usepackage{color}

\begin{document}
\setcounter{tocdepth}{1}

\newtheorem{theorem}{Theorem}    
\newtheorem{proposition}[theorem]{Proposition}
\newtheorem{conjecture}[theorem]{Conjecture}
\def\theconjecture{\unskip}
\newtheorem{corollary}[theorem]{Corollary}
\newtheorem{lemma}[theorem]{Lemma}
\newtheorem{sublemma}[theorem]{Sublemma}
\newtheorem{fact}[theorem]{Fact}
\newtheorem{observation}[theorem]{Observation}
\theoremstyle{definition}
\newtheorem{definition}{Definition}
\newtheorem{notation}[definition]{Notation}
\newtheorem{remark}[definition]{Remark}
\newtheorem{question}[definition]{Question}
\newtheorem{questions}[definition]{Questions}

\newtheorem{example}[definition]{Example}
\newtheorem{problem}[definition]{Problem}
\newtheorem{exercise}[definition]{Exercise}

\numberwithin{theorem}{section}
\numberwithin{definition}{section}
\numberwithin{equation}{section}

\def\reals{{\mathbb R}}
\def\torus{{\mathbb T}}
\def\integers{{\mathbb Z}}
\def\rationals{{\mathbb Q}}
\def\naturals{{\mathbb N}}
\def\complex{{\mathbb C}\/}
\def\heis{{\mathbb H}\/}
\def\distance{\operatorname{distance}\,}
\def\diststar{{\rm dist}^\star}
\def\sym{\operatorname{Symm}\,}
\def\support{\operatorname{support}\,}
\def\dist{\operatorname{dist}}
\def\Span{\operatorname{span}\,}
\def\degree{\operatorname{degree}\,}
\def\kernel{\operatorname{kernel}\,}
\def\dim{\operatorname{dim}\,}
\def\codim{\operatorname{codim}}
\def\trace{\operatorname{trace\,}}
\def\Span{\operatorname{span}\,}
\def\dimension{\operatorname{dimension}\,}
\def\codimension{\operatorname{codimension}\,}
\def\Gl{\operatorname{Gl}}
\def\nullspace{\scriptk}
\def\kernel{\operatorname{Ker}}
\def\ZZ{ {\mathbb Z} }
\def\p{\partial}
\def\rp{{ ^{-1} }}
\def\Re{\operatorname{Re} }
\def\Im{\operatorname{Im} }
\def\ov{\overline}
\def\eps{\varepsilon}
\def\lt{L^2}
\def\diver{\operatorname{div}}
\def\curl{\operatorname{curl}}
\def\etta{\eta}
\newcommand{\norm}[1]{ \|  #1 \|}
\def\expect{\mathbb E}
\def\bull{$\bullet$\ }
\def\det{\operatorname{det}}
\def\Det{\operatorname{Det}}
\def\multiR{\mathbf R}
\def\bestA{\mathbf A}
\def\bestB{\mathbf B}
\def\bestC{\mathbf C}
\def\Apq{\mathbf A_{p,q}}
\def\Apqr{\mathbf A_{p,q,r}}
\def\rank{\operatorname{rank}}
\def\rankk{\mathbf r}
\def\diameter{\operatorname{diameter}}
\def\bp{\mathbf p}
\def\bff{\mathbf f}
\def\bg{\mathbf g}
\def\essd{\operatorname{essential\ diameter}}

\def\mab{M}
\def\t2{\tfrac12}

\newcommand{\abr}[1]{ \langle  #1 \rangle}
\def\unitQ{{\mathbf Q}}
\def\mbfp{{\mathbf P}}

\def\aff{\operatorname{Aff}}
\def\T{{\mathcal T}}

\newcommand{\Norm}[1]{ \Big\|  #1 \Big\| }
\newcommand{\set}[1]{ \left\{ #1 \right\} }
\newcommand{\sset}[1]{ \{ #1 \} }

\def\one{{\mathbf 1}}
\def\onei{{\mathbf 1}_I}
\def\onee{{\mathbf 1}_E}
\def\onea{{\mathbf 1}_A}
\def\oneb{{\mathbf 1}_B}
\def\wonee{\widehat{\mathbf 1}_E}
\newcommand{\modulo}[2]{[#1]_{#2}}

\def\rint{ \int_{\reals^+} }
\def\Abest{{\mathbb A}}
\def\op{\operatorname{Op}}
\def\and{ \ \text{ and }\ }

\def\barrier{\medskip\hrule\hrule\medskip}

\def\symdif{\,\Delta\,}
\def\akd{{\mathbf A}_{k,d}}
\def\ak{{\mathbf A}_{k}}

\def\scriptf{{\mathcal F}}
\def\scripts{{\mathcal S}}
\def\scriptq{{\mathcal Q}}
\def\scriptg{{\mathcal G}}
\def\scriptm{{\mathcal M}}
\def\scriptb{{\mathcal B}}
\def\scriptc{{\mathcal C}}
\def\scriptt{{\mathcal T}}
\def\scripti{{\mathcal I}}
\def\scripte{{\mathcal E}}
\def\scriptv{{\mathcal V}}
\def\scriptw{{\mathcal W}}
\def\scriptu{{\mathcal U}}
\def\scripta{{\mathcal A}}
\def\scriptr{{\mathcal R}}
\def\scripto{{\mathcal O}}
\def\scripth{{\mathcal H}}
\def\scriptd{{\mathcal D}}
\def\scriptl{{\mathcal L}}
\def\scriptn{{\mathcal N}}
\def\scriptp{{\mathcal P}}
\def\scriptk{{\mathcal K}}
\def\scriptP{{\mathcal P}}
\def\scriptj{{\mathcal J}}
\def\scriptz{{\mathcal Z}}
\def\frakv{{\mathfrak V}}
\def\frakG{{\mathfrak G}}
\def\frakA{{\mathfrak A}}
\def\frakB{{\mathfrak B}}
\def\frakC{{\mathfrak C}}
\def\frakf{{\mathfrak F}}
\def\fraki{{\mathfrak I}}
\def\fcross{{\mathfrak F^{\times}}}

\def\boldf{\mathbf f}
\def\boldx{\mathbf x}
\def\boldw{\mathbf w}
\def\bolda{\mathbf a}
\def\boldb{\mathbf b}
\def\boldg{\mathbf g}
\def\bF{\mathbf F}
\def\bE{\mathbf E}
\def\be{\mathbf e}
\def\bA{\mathbf A}
\def\ba{\mathbf a}
\def\bEstar{\mathbf E^\star}
\def\bAstar{\mathbf A^\star}
\def\s01{ \{0,1\} }
\def\sok{ \{0,1\}^k }

\author{Michael Christ}
\address{
        Michael Christ\\
        Department of Mathematics\\
        University of California \\
        Berkeley, CA 94720-3840, USA}
\email{mchrist@math.berkeley.edu}
\thanks{Research supported in part by NSF grant DMS-1363324.}

\date{November 18, 2015.} 

\title[Nearly maximal Gowers norms]
{Subsets of Euclidean space \\ with nearly maximal Gowers norms}

\begin{abstract} A set $E\subset\reals^d$ whose indicator function $\one_E$
has maximal Gowers norm, among all sets of equal measure, is an ellipsoid up to Lebesgue null sets.
If $\one_E$ has nearly maximal Gowers norm then $E$ nearly coincides with an ellipsoid.
\end{abstract}

\maketitle

\section{Introduction}
Let $d\ge 1$.
Let $\norm{f}_{U_k}$ be the Gowers norm of order $k\ge 2$
of a Lebesgue measurable function $f:\reals^d\to\complex$.
These norms\footnote{The $U_1$ ``norm'' is not actually a norm, but provides
a convenient base for the inductive definition.}
are defined inductively by $\norm{f}_{U_1}=|\int f|^2$ and
\[ \norm{f}_{U_{k+1}}^{2^{k+1}}
= \int_{\reals^d} \norm{f\,\overline{f^s}}_{U_k}^{2^k}\,ds\]
where $f^s(x)=f(x+s)$,
provided that the integral converges absolutely. 
Integration is with respect to Lebesgue measure.
See \cite{taovu} for basic properties of these functionals.

In this note we are primarily concerned with indicator functions,
those of the form $f(x)=\one_E(x)=1$ if $x\in E$ and $=0$ if $x\notin E$.
We systematically abuse notation by writing $\norm{E}_{U_k}$ as shorthand for $\norm{\one_E}_{U_k}$.
Thus $\norm{E}_{U_1}^2 = |E|^2$ and
\begin{equation} \norm{E}_{U_{k+1}}^{2^{k+1}} 
= \int_{\reals^d} \norm{{E\cap(E+s)}}_{U_k}^{2^k}\,ds \end{equation}
for $k\ge 1$, where $E+s=\{x+s: x\in E\}$.
In certain applications, these norms have arisen for functions whose
domains are other groups such as $\integers$ or finite cyclic groups, and a key condition has
been that a norm is not arbitrarily small. In the present paper, the focus
is on the Euclidean group $\reals^d$ and on sets whose norm is 
as large as possible, or nearly so.

Denote by $A\symdif B$ the symmetric difference $(A\cup B)\setminus (A\cap B)$ of two
sets, and by $|E|$ the Lebesgue measure of a set $E$.
Of course, $\norm{A}_{U_k}=\norm{B}_{U_k}$ whenever $|A\symdif B|=0$.
All sets mentioned are assumed to be Lebesgue measurable, even when this is not explicitly stated.

The normalized ratio
$\norm{E}_{U_k}\,/\,{|E|^{(k+1)/2^k}}$
is affine-invariant; if $\phi:\reals^d\to\reals^d$ is an affine automorphism then
\begin{equation} \label{eq:affineinvariant} 
\frac{\norm{{\phi(E)}}_{U_k}}{|\phi(E)|^{(k+1)/2^k}} 
= \frac{\norm{{E}}_{U_k}}{|E|^{(k+1)/2^k}}. 
\end{equation}
Therefore
there exist constants $\gamma_{k,d}$ satisfying
\begin{equation} \label{gammadefn}
\norm{\scripte}_{U_k}^{2^k} = \gamma_{k,d} |\scripte|^{k+1} \text{ for every ellipsoid $\scripte\subset\reals^d$.}
\end{equation}

For any Lebesgue measurable set $E\subset\reals^d$ with $|E|<\infty$,
denote by $E^\star\subset\reals^d$ the closed ball centered at $0$ that satisfies $|E^\star|=|E|$.
For $f:\reals^d\to[0,\infty]$ 
denote by 
$f^\star:\reals^d\to[0,\infty]$
its radially symmetric function nonincreasing symmetrization, 
whose definition is reviewed below. 

According to a more general theorem of Brascamp, Lieb, and Luttinger \cite{BLL}, 
$\norm{f}_{U_k}\le \norm{f^\star}_{U_k}$ for arbitrary functions $f$. In particular,
\begin{equation} \norm{E}_{U_k} \le  \norm{{E^\star}}_{U_k} \end{equation}
for all measurable sets $E$ with finite measures.
Thus \begin{equation}\label{eq:powerlaw} 
\norm{E}_{U_k}^{2^k} \le \gamma_{k,d}|E|^{k+1}\end{equation} 
for arbitrary sets, 
and by the affine invariance \eqref{eq:affineinvariant},  
all ellipsoids are among the maximizers of the ratio $\norm{E}_{U_k}\,/\,|E|^{(k+1)/2^k}$.

Our first result states that up to modification by Lebesgue null sets, there are no other maximizing sets.
\begin{theorem} \label{thm:1}
Let  $k\ge 2$ and $d\ge 1$.
Let $E\subset\reals^d$ be a Lebesgue measurable set with $0<|E|<\infty$.
Then $\norm{E}_{U_k}=\norm{{E^\star}}_{U_k}$ if and only if
there exists an ellipsoid $\scripte\subset\reals^d$ such that $|E\symdif \scripte|=0$.
\end{theorem}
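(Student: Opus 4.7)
The plan is to realize the $U_k$ norm as a Brascamp--Lieb--Luttinger (BLL) multilinear integral and to extract ellipsoidal structure from an analysis of its equality case. For nonnegative measurable $f$ one has
\[ \norm{f}_{U_k}^{2^k} \;=\; \int_{(\reals^d)^{k+1}} \prod_{\omega \in \sok} f\bigl(x+\omega_1 h_1 + \cdots + \omega_k h_k\bigr)\, dx\, dh, \]
and the bound $\norm{E}_{U_k}\le\norm{E^\star}_{U_k}$ in \eqref{eq:powerlaw} is exactly BLL applied to $2^k$ identical copies of $\one_E$ composed with the affine functionals $L_\omega(x,h)=x+\omega\cdot h$ on $(\reals^d)^{k+1}$. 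The ``if'' direction of Theorem~\ref{thm:1} is immediate: if $|E\symdif\scripte|=0$ for an ellipsoid $\scripte=\phi(B)$, then \eqref{eq:affineinvariant} together with the fact that after translation $B=B^\star$ reduces the ratio for $E$ to the extremal ratio for $B$. The substance is the converse.

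For the converse I would induct on $k\ge 2$, using the recursion $\norm{E}_{U_{k+1}}^{2^{k+1}} = \int_{\reals^d} \norm{E\cap(E+s)}_{U_k}^{2^k}\,ds$. The base case $k=2$ amounts to characterizing extremizers of the additive-energy functional $\norm{E}_{U_2}^4=\int |E\cap(E+s)|^2\,ds = \norm{\widehat{\one_E}}_4^4$ (the second equality by Plancherel); an appeal to Burchard's equality analysis for the underlying Riesz--Sobolev inequality shows that the extremizers are precisely ellipsoids up to null sets. For the inductive step, assuming $\norm{E}_{U_{k+1}}=\norm{E^\star}_{U_{k+1}}$, one should show that on a set of positive measure the shifted intersection $E\cap(E+s)$ is itself a $U_k$-extremizer among sets of its own measure; the inductive hypothesis then yields that these intersections coincide a.e.\ with ellipsoids $\scripte_s$.

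The main obstacle is two-fold. First, the pointwise inequality $\norm{E\cap(E+s)}_{U_k}^{2^k}\le\gamma_{k,d}|E\cap(E+s)|^{k+1}$ is generally \emph{strict} when $E=E^\star$ is a ball, because the lens $E^\star\cap(E^\star+s)$ is not ellipsoidal; hence the induction cannot proceed slice-by-slice from \eqref{eq:powerlaw}, and must instead track equality through the BLL analysis of the full $U_{k+1}$ integral, extracting a refined extremality condition on the family of shifted intersections. Second, one faces a genuinely geometric \emph{reconstruction} problem: from knowledge that $E\cap(E+s)=\scripte_s$ is an ellipsoid for a.e.\ small $s$, deduce that $E$ itself agrees a.e.\ with an ellipsoid. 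For this I would work with a one-parameter family of shifts along a fixed direction, view $s\mapsto\scripte_s$ as a smooth family of ellipsoids inscribed in $E$, and use the rigidity of quadric hypersurfaces---a boundary supporting a sufficiently rich family of inscribed ellipsoids must itself satisfy a quadratic equation---to conclude that $\partial E$ is a quadric. This reconstruction step is the technical heart of the argument and has no direct analogue in classical BLL rigidity theorems.
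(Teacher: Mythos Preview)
Your proposal contains a genuine gap in the inductive step. You correctly observe that the lens $E^\star\cap(E^\star+s)$ is not an ellipsoid when $s\ne 0$, so the pointwise inequality $\norm{E\cap(E+s)}_{U_k}^{2^k}\le\gamma_{k,d}|E\cap(E+s)|^{k+1}$ is strict even for the extremizer. But then your plan collapses: the ``refined extremality condition'' you hope to extract is never specified, and your reconstruction problem---deducing that $E$ is an ellipsoid from the hypothesis that $E\cap(E+s)$ is an ellipsoid for a.e.\ small $s$---rests on a premise that is \emph{false for the very extremizer you are trying to characterize}. No amount of BLL equality analysis will yield the statement ``$E\cap(E+s)$ is an ellipsoid,'' because it isn't. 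The geometric reconstruction you outline (quadric rigidity from inscribed ellipsoids) is therefore solving the wrong problem.

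The paper's argument avoids this trap by abandoning shape information about $E\cap(E+s)$ and retaining only its \emph{measure} $f(s)=|E\cap(E+s)|$. Writing $f_*,\tilde f_*$ for the decreasing rearrangements of $f$ and of $\tilde f(s)=|E^\star\cap(E^\star+s)|$, one proves (Lemma~\ref{lemma:chaink}) a chain
\[
\norm{E}_{U_k}^{2^k}\;\le\;\gamma_{k-1,d}\int_{\reals^+} f_*^{\,k}
\;\le\;\gamma_{k-1,d}\int_{\reals^+} f_*\,\tilde f_*^{\,k-1}
\;\le\;\gamma_{k-1,d}\int_{\reals^+}\tilde f_*^{\,k}
\;=\;\norm{E^\star}_{U_k}^{2^k},
\]
the middle steps by repeated integration by parts against the primitive $F(t)=\int_0^t f_*$ together with the Riesz--Sobolev bound $F\le\tilde F$. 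Equality forces $\int F\,d\mu=\int\tilde F\,d\mu$ for a measure $\mu$ mutually absolutely continuous with Lebesgue on $(0,2^d|E|)$, hence $F\equiv\tilde F$ there. But $F(s)=\tilde F(s)$ for a single $s\in(0,2^d|E|)$ is exactly equality in a \emph{trilinear} Riesz--Sobolev inequality $\int_A |E\cap(E+y)|\,dy=\int_{A^\star}|E^\star\cap(E^\star+y)|\,dy$ with strictly admissible data, and Burchard's theorem applies directly to give that $E$ is an ellipsoid. No induction on $k$ and no geometric reconstruction is needed; the reduction is from a $2^k$--linear equality to a trilinear one via the rearrangement chain.
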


For general functions a related result was previously known.
Set $p_k = 2^k(k+1)^{-1}$. Then 
$\norm{f}_{U_k} \le A_{k,d}\norm{f}_{p_k}$ for all $f\in L^{p_k}(\reals^d)$
for a certain constant $A_{k,d}<\infty$.
The optimal constant $A_{k,d}$ in this inequality has been calculated explicitly by Eisner and Tao \cite{eisnertao},
who also showed that only Gaussian functions $f$ maximize $\norm{f}_{U_k}\,/\,\norm{f}_{p_k}$.
An easy consequence is that an arbitrary complex-valued function is a maximizer if and only if
it takes the form $f = Ge^{i\phi}$ where $G$ is a real Gaussian
and $\phi:\reals^d\to\reals$ is a real-valued polynomial of degree strictly less than $k$.
However, this leaves open the question of characterizing functions
satisfying $\norm{f}_{U_k}=\norm{f^\star}_{U_k}$ 
without maximizing the ratio $\norm{f^\star}_{U_k}/\norm{f}_{p_k}$. 

By a radial nonincreasing function is meant $f:\reals^d\to[0,\infty)$
of the form $f(x)=g(|x|)$ Lebesgue almost everywhere, 
where $g$ has domain $(0,\infty)$ and $g$ is nonincreasing.
Only the ``only if'' implication in the following statement is nontrivial.
\begin{corollary} \label{cor:functions}
Let $d\ge 1$ and $k\ge 2$. 
Let $f:\reals^d\to[0,\infty]$ be Lebesgue measurable and suppose
that $\norm{f^\star}_{U_k}<\infty$.
Then $\norm{f}_{U_k}=\norm{f^\star}_{U_k}$
if and only if
there exists an affine automorphism $\phi$ of $\reals^d$
such that $f\circ\phi$ agrees almost everywhere with a radial nonincreasing function.
\end{corollary}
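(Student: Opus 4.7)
The ``if'' direction is a direct computation: if $f\circ\phi$ is a radial nonincreasing function then it coincides a.e.\ with its own symmetrization, while the scaling law $\norm{f\circ\phi}_{U_k}=|\det\phi|^{-(k+1)/2^k}\norm{f}_{U_k}$ combined with the corresponding dilation rule for the symmetric decreasing rearrangement yields $\norm{f}_{U_k}=\norm{f^\star}_{U_k}$.

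For the ``only if'' direction, assume $\norm{f}_{U_k}=\norm{f^\star}_{U_k}<\infty$. The plan is to apply Theorem~\ref{thm:1} to individual superlevel sets of $f$ and then pin down a common affine structure using the equality case of the Brascamp--Lieb--Luttinger inequality. I will expand
\[
\norm{f}_{U_k}^{2^k}=\int_{\reals^{d(k+1)}}\prod_{\omega\in\{0,1\}^k}f(x+\omega\cdot h)\,dx\,dh
\]
and insert the layer-cake identity $f(y)=\int_0^\infty\one_{E_t}(y)\,dt$ with $E_t=\{f>t\}$; Fubini then gives
\[
\norm{f}_{U_k}^{2^k}=\int_{[0,\infty)^{2^k}}J(\vec t)\,d\vec t,\qquad J(\vec t)=\int\prod_{\omega}\one_{E_{t_\omega}}(x+\omega\cdot h)\,dx\,dh.
\]
By \cite{BLL}, $J(\vec t)\le J^\star(\vec t)$, where $J^\star$ is the analogous integral with each $E_{t_\omega}$ replaced by the centered ball $E_{t_\omega}^\star$, and $\int J^\star\,d\vec t=\norm{f^\star}_{U_k}^{2^k}$ by layer cake applied to $f^\star$. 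The hypothesis therefore forces $J(\vec t)=J^\star(\vec t)$ for almost every $\vec t$. Specialising to the diagonal $\vec t=(t,\ldots,t)$ gives $\norm{E_t}_{U_k}=\norm{E_t^\star}_{U_k}$, so by Theorem~\ref{thm:1} there is an ellipsoid $\scripte_t$ with $|E_t\symdif\scripte_t|=0$ for a.e.\ $t>0$.

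Next I fix a value $t_0>0$ at which $E_{t_0}=\scripte_{t_0}$ modulo a null set and $|\scripte_{t_0}|$ is positive and finite, and choose an affine automorphism $\phi$ carrying the closed unit ball $B\subset\reals^d$ onto $\scripte_{t_0}$. Replacing $f$ by $g=f\circ\phi$ preserves the equality $\norm{g}_{U_k}=\norm{g^\star}_{U_k}$ (by the same scaling rule used in the ``if'' direction) and arranges for $\{g>t_0\}$ to coincide with $B$ up to a null set; rerunning the layer-cake step for $g$, every superlevel set $\{g>t\}$ is a.e.\ an ellipsoid.

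The main obstacle is now to upgrade ``each level set is an ellipsoid'' to ``every level set is a ball about the origin'' for $g$. For this I specialize the pointwise equality $J(\vec t)=J^\star(\vec t)$ (for $g$) to mixed tuples with $t_{\vec 0}=t$ and $t_\omega=t_0$ for all $\omega\ne\vec 0$; this equality persists for almost every $t$. The linear forms $L_\omega(x,h_1,\ldots,h_k)=x+\omega\cdot h$ on $\reals^{d(k+1)}$ are pairwise jointly surjective onto $\reals^{2d}$ (given $\omega\ne\omega'$ pick $i$ with $\omega_i\ne\omega_i'$, set $h_j=0$ for $j\ne i$, and vary $x$ and $h_i$ independently), so Burchard's equality theorem for the Brascamp--Lieb--Luttinger inequality applies and forces $\{g>t\}$ and $\{g>t_0\}=B$ to be balls about a common center, necessarily the origin. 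Thus $\{g>t\}$ is an origin-centered ball for a.e.\ $t$; reassembling via the layer cake shows $g=g^\star$ a.e., so $f\circ\phi=g$ is radial nonincreasing, completing the proof. The delicate point is the invocation of Burchard's equality theorem in the exact form needed: if that statement is not available off the shelf for this configuration of forms, one would adapt the rearrangement arguments underlying the proof of Theorem~\ref{thm:1} to derive the required equality conclusion directly.
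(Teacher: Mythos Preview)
Your overall strategy---layer cake, BLL pointwise, Theorem~\ref{thm:1} on the diagonal, then compatibility of the resulting ellipsoids via off-diagonal equalities---is the same as the paper's. Two gaps remain.

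First, the equality $J(\vec t)=J^\star(\vec t)$ is established only for \emph{almost every} $\vec t\in[0,\infty)^{2^k}$, while both the diagonal $\{(t,\dots,t)\}$ and your one-parameter family $\{t_{\vec 0}=t,\ t_\omega=t_0\ \text{for }\omega\ne\vec 0\}$ are Lebesgue-null subsets; you cannot specialize directly. The paper closes this by observing that $t\mapsto E_t$ is right-continuous in measure (since $E_t=\bigcup_{s>t}E_s$ with $|E_t|<\infty$), so both $J$ and $J^\star$ are right-continuous in each coordinate and a.e.\ equality upgrades to equality everywhere.

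Second, the appeal to Burchard's equality theorem for the full BLL form is not fully justified, as you yourself flag. Pairwise joint surjectivity of the forms $L_\omega$ is one ingredient, but Burchard's theorem also carries a strict admissibility condition on the \emph{measures} of the sets, and for the $2^k$-fold Gowers configuration with one set of arbitrary size against $2^k-1$ copies of $B$ this is neither obvious nor checked here. Your setup moreover handles only $t>t_0$ (where $|\{g>t\}|\le|B|$); for $t<t_0$ the inclusion reverses and the argument as written does not apply, yet $\phi$ was fixed in terms of $t_0$. The paper sidesteps both issues by arguing \emph{pairwise} (Lemma~\ref{lemma:painfulend}): for each pair $s>t$ it chooses the affine map making the \emph{larger} set $E_t$ a centered ball $B$, and then proves from scratch---via Brunn--Minkowski on slices of the convex body $K=\{(y,\boldx):y+\alpha\cdot\boldx\in B\ \forall\,\alpha\ne 0\}$---that the density $L(y)=|K_y|$ in the representation $\scriptt_k(\one_S,\one_B,\dots,\one_B)=\int_S L$ is strictly radially decreasing on $B$ (Sublemma~\ref{sublemma}), forcing the smaller set $E_s\subset B$ to be a concentric ball (Lemma~\ref{lemma:diagonalminusone}). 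This self-contained route never invokes Burchard beyond the three-set Riesz--Sobolev case already used in Theorem~\ref{thm:1}.
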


Our third result is a more robust form of Theorem~\ref{thm:1}.
\begin{theorem} \label{thm:2}
Let $k\ge 2$ and $d\ge 1$.
For any $\eps>0$ there exists $\delta>0$ such that for any Lebesgue measurable set
$E\subset\reals^d$ satisfying
$0<|E|<\infty$ and
\begin{equation} \norm{E}_{U_k} \ge (1-\delta) \norm{{E^\star}}_{U_k}, \end{equation}
there exists an ellipsoid $\scripte$ such that
\begin{equation} |E\symdif \scripte| <\eps |E|.  \end{equation}
\end{theorem}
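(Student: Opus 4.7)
The approach is compactness-contradiction, using Theorem~\ref{thm:1} as a black box. Suppose the conclusion fails: choose $\eps_0>0$ and measurable sets $E_n\subset\reals^d$ with $0<|E_n|<\infty$ such that $\|E_n\|_{U_k}/\|{E_n^\star}\|_{U_k}\to 1$, yet $\inf_\scripte|E_n\symdif\scripte|\ge\eps_0|E_n|$ over all ellipsoids $\scripte\subset\reals^d$. Using the affine invariance \eqref{eq:affineinvariant}, I would replace each $E_n$ by an affine image so that $|E_n|=|B(0,1)|$, the first moment $\int_{E_n}x\,dx$ vanishes, and the second-moment tensor $(\int_{E_n}x_ix_j\,dx)_{ij}$ is a scalar multiple $c_nI$ of the identity. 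The hardest part of the argument is to verify that $c_n$ is uniformly bounded: I expect this from a direct quantitative estimate showing that if $c_n$ is large then, after this normalization, the set $E_n$ is too diffuse and the ratio $\|E_n\|_{U_k}/\|{E_n^\star}\|_{U_k}$ is forced bounded away from $1$. Granting this, Chebyshev gives $\sup_n|E_n\setminus B(0,R)|\to 0$ as $R\to\infty$. Passing to a subsequence, $\one_{E_n}\to f$ weak-$*$ in $L^\infty(\reals^d)$, with $0\le f\le 1$ and $\int f=|B(0,1)|$.

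The next step is to verify $\|E_n\|_{U_k}^{2^k}\to\|f\|_{U_k}^{2^k}$. Under tightness the defining $2^k$-fold integrand is effectively supported in a common compact set, so a multilinear dominated-convergence argument applies. Since weak-$*$ convergence does not pass through products directly, I would proceed by induction on $k$ via the recursion $\|E\|_{U_{k+1}}^{2^{k+1}}=\int\|E\cap(E+s)\|_{U_k}^{2^k}\,ds$, extracting on a subsequence the weak-$*$ convergence of $\one_{E_n\cap(E_n+s)}$ for a.e.\ $s$ and invoking the inductive hypothesis. The base case $k=1$ is the elementary continuity of $g\mapsto|\int g|^2$ on weakly convergent sequences with uniform compact support.

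To conclude that $f$ is an indicator, I would use the layer-cake expansion $f=\int_0^1\one_{E_t}\,dt$ with $E_t=\{f>t\}$, combined with the Gowers--Cauchy--Schwarz inequality $|\langle g_1,\dots,g_{2^k}\rangle_{U_k}|\le\prod_i\|g_i\|_{U_k}$ and the power law $\|\one_E\|_{U_k}\le\gamma_{k,d}^{1/2^k}|E|^{(k+1)/2^k}$ from \eqref{eq:powerlaw}. This yields
\[\|f\|_{U_k}^{2^k}=\int_{[0,1]^{2^k}}\langle\one_{E_{t_1}},\dots,\one_{E_{t_{2^k}}}\rangle_{U_k}\,dt\le\gamma_{k,d}\Big(\int_0^1|E_t|^{(k+1)/2^k}\,dt\Big)^{2^k}.\]
Since $(k+1)/2^k<1$ for $k\ge 2$, Jensen's inequality applied to the strictly concave function $x^{(k+1)/2^k}$ on $[0,1]$ gives $\int_0^1|E_t|^{(k+1)/2^k}\,dt\le|B(0,1)|^{(k+1)/2^k}$, with equality iff $|E_t|$ is a.e.\ constant in $t\in[0,1]$---which, together with $0\le f\le 1$, forces $f$ to be an indicator function. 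The maximality $\|f\|_{U_k}^{2^k}=\gamma_{k,d}|B(0,1)|^{k+1}$ thus yields $f=\one_E$ for some $E$ with $|E|=|B(0,1)|$.

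By Theorem~\ref{thm:1} there is an ellipsoid $\scripte$ with $|E\symdif\scripte|=0$. Because $\one_{E_n}$ and $\one_\scripte$ are all uniformly compactly supported (by tightness), weak-$*$ convergence in $L^\infty$ is equivalent to weak convergence in $L^2$, and the identity $\|\one_{E_n}\|_{L^2}^2=|E_n|=\int f=\|\one_\scripte\|_{L^2}^2$ provides matched norms; in a Hilbert space this upgrades weak convergence to strong. Therefore $|E_n\symdif\scripte|=\|\one_{E_n}-\one_\scripte\|_{L^2}^2\to 0$, contradicting $|E_n\symdif\scripte|\ge\eps_0|B(0,1)|$.
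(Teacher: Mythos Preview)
Your compactness-contradiction approach is genuinely different from the paper's direct quantitative proof (which pushes the chain of inequalities $\int f_*^j\tilde f_*^{k-j}$ through Lemma~\ref{lemma:nuisance} to locate an $s$ with $F(s)\ge(1-\delta^{1/2})\tilde F(s)$ and then invokes the Riesz--Sobolev stability theorem of \cite{christRS3},\cite{christRShigher}). However, your argument has a fatal gap at the step where you claim $\norm{E_n}_{U_k}^{2^k}\to\norm{f}_{U_k}^{2^k}$.

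The Gowers norm is \emph{not} continuous under weak-$*$ convergence, even with tightness. Concretely, take $E_n\subset[0,1]$ to be the union of the intervals $[2j/n,(2j+1)/n]$, $0\le j<n/2$; then $\one_{E_n}\rightharpoonup \tfrac12\one_{[0,1]}=f$, yet a short computation gives $\norm{E_n}_{U_2}^4\to 1/18$ while $\norm{f}_{U_2}^4=1/24$. The obstruction is precisely that products do not pass through weak limits, and your inductive argument does not evade this: for each fixed $s$ you extract a weak-$*$ limit $h_s$ of $\one_{E_n\cap(E_n+s)}$, but there is no reason for $h_s$ to equal $f(\cdot)f(\cdot+s)$, so the limit of $\int\norm{E_n\cap(E_n+s)}_{U_k}^{2^k}\,ds$ need not be $\norm{f}_{U_{k+1}}^{2^{k+1}}$. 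Without this identification your Jensen step never fires, because you cannot conclude that $\norm{f}_{U_k}$ attains the maximal value. What you would really need is a concentration-compactness principle for Gowers norms---something that rules out oscillation losses---and the near-maximality hypothesis should be the input, but you have not indicated how to use it for this purpose.

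A secondary point: you concede that bounding $c_n$ is ``the hardest part'' but give no argument. This is a genuine nondegeneracy statement (diffuse sets have small normalized Gowers norm) that requires proof; it is not obviously easier than the theorem itself.
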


Results like Theorems~\ref{thm:1} and \ref{thm:2},
concerning indicator functions of sets,
are more fundamental than analogous results for general nonnegative functions,
and can be applied to general functions by representing the latter 
as superpositions of indicator functions \cite{liebloss}.
See for instance the proof of Corollary~\ref{cor:functions} given below,
and \cite{christyoungest}, for two applications in this spirit.

While Theorem~\ref{thm:2} is the main goal of this paper,
the author is not aware of the more elementary 
Theorem~\ref{thm:1} having previously been noted in the literature.
\S\ref{section:prelim} introduces basic elements of the analysis.
\S\ref{section:chain} develops an alternative proof of the general inequality
$\norm{E}_{U_k}\le\norm{{E^\star}}_{U_k}$,
relying on the classical Riesz-Sobolev inequality. This proof yields additional
information not obtained through a direct application of the Brascamp-Lieb-Luttinger inequality.
In \S\ref{section:equality} this additional information is exploited 
to characterize cases of equality. In \S\ref{section:near} we combine the argument
developed in earlier sections with the Riesz-Sobolev stability theorem of \cite{christRS3} 
to prove Theorem~\ref{thm:2}.
\S\ref{section:corollary} contains the proof of Corollary~\ref{cor:functions}. 

A natural goal is to extend these results to multilinear forms which underlie
the Gowers norms. Thus one has certain functionals of $2^k$--tuples
${\mathbf E} = (E_\alpha: \alpha\in\{0,1\}^k)$ of sets. In the case in which $E_\alpha=E$
for all $\alpha$, one obtains $\norm{E}_{U_k}^{2^k}$. The supremum of this
quantity over all sets
$E$ of some specified Lebesgue measure $m$ takes the simple form $\gamma_{k,d} m^{k+1}$,
and the analysis below exploits this simple algebraic expression.
In contrast, the analogous supremum in the multilinear context is
a more complicated function of $(|E_\alpha|: \alpha\in\{0,1\}^k)$; this function
is a piecewise cubic polynomial in four variables for $k=2$,
and appears to be prohibitively complicated for $k\ge 3$. 
We nonetheless hope to treat these questions in a sequel by a related argument.

The author thanks Anh Nguyen for useful comments on the exposition.

\section{Preliminaries} \label{section:prelim}

Let
$f:\reals^d\to[0,\infty]$ be Lebesgue measurable
and assume that
$|\{x: f(x)>t\}|<\infty$ for all $t>0$.
To such a function we associate auxiliary functions $f^\star$ and $f_*$. 

\begin{definition} \label{defn:1}
The radially symmetric nonincreasing symmetrization $f^\star:\reals^d\to[0,\infty]$
of $f$ is the unique radially symmetric function that satisfies
\[|\{x: f^\star(x)>t\}| =|\{x: f(x)>t\}|\ \ \text{for all $t>0$,} \]
with the right continuity property
\[\lim_{|x|\to |y|^+} f^\star(x)=f^\star(y)\  \  \text{for all $y\in\reals^d$.}\]
\end{definition}

\begin{definition} \label{defn:2}
$f_*:(0,\infty)\to[0,\infty)$ 
is defined to be the unique right-continuous nonincreasing function satisfying
\begin{equation} \label{fstardef}
|\{t\in\reals^+: f_*(t) > \alpha\}| = |\{x\in\reals^d: f(x) > \alpha\}|\ \ 
\text{ for almost every } \alpha>0.\end{equation}
\end{definition}

$f^\star$ has domain $\reals^d$, while $f_*$ has domain $(0,\infty)$.
In the next definition, $E+s=\{x+s: x\in E\}$.
\begin{definition} \label{defn:3}
Let $E\subset\reals^d$ be Lebesgue measurable with $|E|<\infty$. 
The associated functions $f,F,\tilde f,\tilde F$ are defined by
\begin{align} 
f(s)&=|E\cap(E+s)| 
\\ F(t)& =\int_0^t f_*(\tau)\,d\tau \ \ \text{ for $t\in\reals^+$.} 
\\ \tilde f(s) &=|E^\star\cap (E^\star+s)|
\\ \tilde F(t)& =\int_0^t \tilde f_*(\tau)\,d\tau \ \ \text{ for $t\in\reals^+$} 
\end{align} 
where $\tilde f_* = (\tilde f)_*$.
\end{definition}
The functions $f_*,\tilde f_* = (\tilde f)_*$ mapping $\reals^+$ to $[0,\infty)$ 
are defined in terms of $f,\tilde f$ by Definition~\ref{defn:2}.
The functions $f,F,\tilde f,\tilde F$ should perhaps be denoted as $f_E,F_E,\tilde f_E,\tilde F_E$ respectively,
but only one set $E$ will be under discussion at a time so the simplified notation will be used.

An equivalent description of $F$  will be used below.
\begin{lemma} \label{lemma:bathtub}
Let $f:\reals^d\to[0,\infty]$ be a Lebesgue measurable function
such that $|\{x: f(x)>t\}|<\infty$ for all $t>0$.
Let $F(t) = \int_0^t f_*(s)\,ds$.
For any $t>0$,
\begin{equation} \label{describeF} F(t) = \sup_{A:\,\, |A|=t}\ \int_A f\end{equation}
where the supremum is taken over all Lebesgue measurable sets $A\subset\reals^d$
satisfying $|A|=t$.
\end{lemma}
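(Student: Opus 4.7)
The plan is to convert both sides of \eqref{describeF} to integrals of a common distribution function via the layer-cake formula, after which the inequality is immediate and the equality is realized by an appropriate super-level set of $f$.

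First I would write $F(t)$ as a distribution-function integral. Let $\lambda(\alpha) := |\{x\in\reals^d: f(x) > \alpha\}|$. By Definition~\ref{defn:2}, $f_*$ viewed as a function on $(0,\infty)$ has the same distribution function $\lambda$. Applying the layer-cake identity to $f_*$ on the interval $(0,t)$ yields
\[ F(t) = \int_0^t f_*(s)\,ds = \int_0^\infty \bigl|\{s\in(0,t): f_*(s)>\alpha\}\bigr|\,d\alpha = \int_0^\infty \min(t,\lambda(\alpha))\,d\alpha. \]

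Second, for the upper bound in \eqref{describeF}, let $A\subset\reals^d$ be measurable with $|A|=t$. Layer cake applied to $f$ on $A$, combined with $|A\cap E|\le\min(|A|,|E|)$, gives
\[ \int_A f = \int_0^\infty |A\cap\{f>\alpha\}|\,d\alpha \le \int_0^\infty \min(t,\lambda(\alpha))\,d\alpha = F(t). \]

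Third, to exhibit a set attaining the supremum, I would set $\alpha_0 := f_*(t)$. Monotonicity and right-continuity of $f_*$, together with the identification of its distribution function with $\lambda$, imply $|\{f>\alpha_0\}|\le t \le |\{f\ge\alpha_0\}|$. Since Lebesgue measure on $\reals^d$ is nonatomic, one may choose a measurable $A$ with $\{f>\alpha_0\}\subset A\subset\{f\ge\alpha_0\}$ and $|A|=t$. For $\alpha\ge\alpha_0$ one has $A\supset\{f>\alpha\}$, so $|A\cap\{f>\alpha\}|=\lambda(\alpha)\le t$; for $\alpha<\alpha_0$ one has $A\subset\{f\ge\alpha_0\}\subset\{f>\alpha\}$, so $|A\cap\{f>\alpha\}|=t\le\lambda(\alpha)$. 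Splitting the layer-cake integral at $\alpha_0$ produces exactly $\int_0^\infty\min(t,\lambda(\alpha))\,d\alpha=F(t)$.

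The argument is routine and I do not anticipate any substantive obstacle. The only mild point of care is the case $F(t)=\infty$, which can be handled by truncating $f$ at height $M$, applying the finite case to $\min(f,M)$, and letting $M\to\infty$ (both sides tend to $+\infty$ monotonically).
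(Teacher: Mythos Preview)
Your argument is correct. The paper does not actually prove this lemma; it declares the equivalence ``straightforward and well known'' and refers to Theorem~1.14 of Lieb--Loss (the bathtub principle). Your layer-cake computation is precisely one of the standard ways to establish that principle, and all three steps are sound as written.

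One minor remark: the truncation in your final paragraph is unnecessary. The set $A$ you construct in the third step satisfies $\int_A f = \int_0^\infty \min(t,\lambda(\alpha))\,d\alpha$ by the same layer-cake calculation regardless of whether this quantity is finite, so the equality $\int_A f = F(t)$ holds in the infinite case as well.
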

The equivalence is straighforward and well known, so we omit the proof.
See for instance Theorem~1.14 of \cite{liebloss}.
By applying this to $E^\star$, we obtain a corresponding description of $\tilde F$.

\begin{lemma} \label{lemma:FFtilde}
Let $E\subset\reals^d$ be a Lebesgue measurable set with $|E|<\infty$.
Let $f,\tilde f,F,\tilde F$ be associated to $E$ as above.
Then
\begin{equation}\label{ineq:FFtilde} F(s)\le \tilde F(s)\ \text{ for all $s\in\reals^+$.} \end{equation}
Moreover, if there exists $s\in (0,2^d |E|)$ for which $F(s)=\tilde F(s)$
then there exists an ellipsoid $\scripte\subset\reals^d$ satisfying $|E\symdif\scripte|=0$.
\end{lemma}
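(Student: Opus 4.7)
My plan is to combine the bathtub principle of Lemma~\ref{lemma:bathtub} with the classical Riesz-Sobolev inequality for three functions, and then to invoke Burchard's characterization of the equality case. The key identity is
\[
\int_A f(s)\,ds = \int_{\reals^d}\!\!\int_{\reals^d} \one_E(x)\,\one_E(y)\,\one_A(x-y)\,dx\,dy,
\]
obtained from $f(s)=\int \one_E(x)\one_E(x-s)\,dx$ by the substitution $y=x-s$. By Lemma~\ref{lemma:bathtub}, $F(t)$ is the supremum of the left-hand side over all Lebesgue measurable $A\subset\reals^d$ with $|A|=t$. The Riesz-Sobolev inequality applied to the triple $(E,E,A)$ on the right-hand side bounds it by the analogous quantity with $(E^\star,E^\star,A^\star)$ in place of $(E,E,A)$, which equals $\int_{A^\star}\tilde f(s)\,ds$. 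Since $\tilde f$ is already radial and nonincreasing, this last integral equals $\tilde F(t)$, and taking the supremum over $A$ yields \eqref{ineq:FFtilde}.

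For the equality statement, suppose $F(s_0)=\tilde F(s_0)$ for some $s_0\in(0,2^d|E|)$. I would choose a set $A_0\subset\reals^d$ with $|A_0|=s_0$ that realizes the supremum defining $F(s_0)$; by the bathtub principle such an $A_0$ is, up to a null set, a superlevel set $\{f>\tau\}$ possibly augmented by a portion of the level set $\{f=\tau\}$, and it may be taken symmetric about the origin since $f$ is even. The hypothesized equality then forces equality in the Riesz-Sobolev inequality for the triple $(E,-E,A_0)$.

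Since $(-E)^\star=E^\star$, the radii $r_{E^\star}, r_{E^\star}, r_{A_0^\star}$ of the associated balls satisfy all three triangle inequalities strictly, precisely because $s_0\in(0,2^d|E|)$ translates into $0<r_{A_0^\star}<2r_{E^\star}$. This places the triple in the nondegenerate regime of Burchard's theorem on equality in the Riesz-Sobolev inequality, which forces $E$, $-E$, and $A_0$ to agree, up to Lebesgue null sets, with homothetic ellipsoids centered at points $a$, $-a$, and $0$, respectively. In particular $E$ itself coincides, modulo a null set, with an ellipsoid.

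The main obstacle is invoking Burchard's equality theorem in the correct form and verifying its nondegeneracy hypothesis, for which the quantitative condition $s_0<2^d|E|$ is tailor-made. A minor technical point is that the bathtub maximizer $A_0$ may involve a portion of a plateau of $f$, but this is harmless for the Riesz-Sobolev equality analysis since modifications on null sets do not affect the conclusion.
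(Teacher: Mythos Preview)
Your proposal is correct and follows essentially the same route as the paper's proof: express $\int_A f$ as a Riesz--Sobolev trilinear integral, bound it by the symmetrized version to obtain $F\le\tilde F$ via Lemma~\ref{lemma:bathtub}, and then for the equality case pick a bathtub maximizer $A_0$, verify strict admissibility from $0<s_0<2^d|E|$, and invoke Burchard's theorem. The additional remarks you make about the evenness of $f$, the structure of $A_0$, and the plateau issue are correct but not needed for the argument; note also that the relevant Riesz--Sobolev triple is simply $(E,E,A_0)$ rather than $(E,-E,A_0)$, though as you observe this makes no difference to the conclusion.
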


The proof relies on a theorem of Burchard \cite{burchard}. 
If $(|A|,|B|,|C|)$ is strictly admissible in the sense that the sum of
any two of these quantities is strictly greater than the third,
and if $\langle \one_A*\one_B,\one_C\rangle =\langle \one_{A^\star}*\one_{B^\star},\one_{C^\star}\rangle$,
then there exists an ellipsoid $\scripte$
such that $|A\symdif \scripte|=0$.

\begin{proof}[Proof of Lemma~\ref{lemma:FFtilde}]
The Riesz-Sobolev inequality states that 
for any three measurable sets $A,B,C\subset\reals^d$,
$\int_A \one_B*\one_C \le \int_{A^\star} \one_{B^\star}*\one_{C^\star}$.
Writing \[\one_B*\one_{-C}(x) = \int \one_B(y)\one_{-C}(x-y)\,dy
= \int \one_B(y)\one_{C}(y-x)\,dy\] where $-C=\{-b: b\in C\}$ gives
$\int_A \one_B*\one_C = \int \one_A(x) |B\cap (\tilde C-x)|\,dx$. 

Upon putting $f(s)=|E\cap(E+s)|$ one obtains
\begin{equation} \sup_{A:\,\, |A|=r}\ \int_A f\le \sup_{A:\,\, |A|=r}\ \int_A \tilde f\end{equation}
for all $r\in\reals^+$. This can be rewritten in terms of $F,\tilde F$ as
\begin{equation} F(s)\le\tilde F(s) \ \text{for all $s\in(0,\infty)$.} \end{equation} 

Now suppose that $s\in(0,2^d|E|)$ and $F(s)=\tilde F(s)$.
By Lemma~\ref{lemma:bathtub} there exists a set $A\subset\reals^d$
satisfying $|A|=s$ such that $\int_{A} |E\cap(E+y)|\,dy
= F(s)$. Likewise, $\tilde F(s)=\int_{A^\star} |E^\star\cap(E^\star+y)|\,dy$.
The condition $F(s)=\tilde F(s)$ thus means that
\begin{equation}\label{applyburchard} \int_{A} |E\cap(E+y)|\,dy = \int_{A^\star} |E^\star\cap(E^\star+y)|\,dy.
\end{equation}
The ordered triple $(|E|^{1/d},|E|^{1/d},|A|^{1/d})$ is strictly admissible when $0<|A|<2^d|E|$;
the sum of any two of these quantities is strictly greater than the third.
Therefore by Burchard's theorem, $E$ is an ellipsoid, up to Lebesgue null sets.
\end{proof}

The relation $F(s)=\tilde F(s)$ holds for some $s\in (0,2^d|E|)$ 
if and only if it holds for all $s$ in this interval.
For the latter implies that $E$ is an ellipsoid up to null sets;
by affine invariance, $F(s),\tilde F(s)$ are unchanged if this ellipsoid is replaced by 
a ball centered at the origin of equal measure;
and $F$ is identically equal to $\tilde F$ when $E$ is a ball.

\section{A chain of inequalities}\label{section:chain}

Let $\gamma_{k,d}$ be as defined in \eqref{gammadefn}.
\begin{lemma} \label{lemma:chaink}
Let $k\ge 2$ and $d\ge 1$.
For any Lebesgue measurable set $E\subset\reals^d$ satisfying $|E|<\infty$,
\begin{equation} \label{chain}
\norm{E}_{U_k}^{2^k}
\le \gamma_{k-1,d} \int_{\reals^+} f_* \tilde f_*^{k-1}
\le \gamma_{k-1,d} \int_{\reals^+} \tilde f_*^{k}
= \norm{{E^\star}}_{U_k}^{2^k}.
\end{equation} 
\end{lemma}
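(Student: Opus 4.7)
The plan is to establish the three parts of \eqref{chain} separately, using Lemma~\ref{lemma:FFtilde} (i.e., $F\le\tilde F$) combined with integration by parts as the main tool.

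For the first inequality I would begin with the recursive definition $\norm{E}_{U_k}^{2^k}=\int\norm{E\cap(E+s)}_{U_{k-1}}^{2^{k-1}}\,ds$ and apply \eqref{eq:powerlaw} at index $k-1$ to the slice $E\cap(E+s)$:
\[
 \norm{E\cap(E+s)}_{U_{k-1}}^{2^{k-1}}\le\gamma_{k-1,d}|E\cap(E+s)|^k=\gamma_{k-1,d}f(s)^k.
\]
Integrating over $s$ and invoking equimeasurability of $f$ and $f_*$ yields $\norm{E}_{U_k}^{2^k}\le\gamma_{k-1,d}\int_0^\infty f_*(t)^k\,dt$. To replace $f_*^k$ by $f_*\tilde f_*^{k-1}$ I would use the telescoping identity
\[
 \tilde f_*^{k-1}-f_*^{k-1}=(\tilde f_*-f_*)\sum_{j=0}^{k-2}f_*^{\,j}\tilde f_*^{\,k-2-j},
\]
multiply by $f_*$, and integrate term by term. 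For each $j$, the function $g_j:=f_*^{\,j+1}\tilde f_*^{\,k-2-j}$ is nonincreasing, so with $H:=\tilde F-F\ge 0$ (from Lemma~\ref{lemma:FFtilde}) and $dH=(\tilde f_*-f_*)\,dt$, integration by parts gives $\int g_j\,dH=-\int H\,dg_j\ge 0$, the boundary terms vanishing because $H(0)=0$ and $H,g_j$ decay at infinity. Summing over $j$ yields $\int f_*^k\le\int f_*\tilde f_*^{k-1}$.

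The middle inequality is the same scheme with the single nonincreasing weight $g:=\tilde f_*^{k-1}$, giving $\int(\tilde f_*-f_*)\tilde f_*^{k-1}\,dt=-\int H\,d(\tilde f_*^{k-1})\ge 0$. The final equality comes from specializing the chain to $E=E^\star$, where $f=\tilde f$ and $f_*=\tilde f_*$, so both preceding inequalities collapse and $\gamma_{k-1,d}\int\tilde f_*^k\,dt$ is identified with $\norm{E^\star}_{U_k}^{2^k}$; equimeasurability converts $\int_0^\infty\tilde f_*^k\,dt$ to $\int_{\reals^d}\tilde f(s)^k\,ds$ as needed.

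The main subtlety I anticipate is ensuring the first inequality genuinely collapses to an equality on $E^\star$: the slicewise bound \eqref{eq:powerlaw} applied to the lens $E^\star\cap(E^\star+s)$ is not tight for $d\ge 2$, so some extra input is needed to propagate sharpness through the argument. The natural remedy is an induction on $k$ that invokes the final equality of the chain at index $k-1$ on each slice and combines these via Lemma~\ref{lemma:FFtilde}; the bookkeeping of constants $\gamma_{j,d}$ through this induction is the technical heart of the proof.
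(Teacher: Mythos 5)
Your treatment of the middle part of the chain is correct and is a legitimate variant of the paper's argument: where the paper iterates an integration-by-parts step to convert factors of $f_*$ into $\tilde f_*$ one at a time (yielding the full chain $\int f_*^k\le\int f_*^{k-1}\tilde f_*\le\cdots\le\int\tilde f_*^k$), you telescope the difference $\tilde f_*^{k-1}-f_*^{k-1}$ and integrate each summand against $dH$ at once. The key inputs (Lemma~\ref{lemma:FFtilde}, monotonicity of the weight $g_j$, $H(0)=0$, decay of $g_jH$) are identical, so this is essentially the same proof in different bookkeeping. Your first inequality likewise matches the paper, except that the paper phrases the slicewise bound as the inductive hypothesis $\norm{E\cap(E+s)}_{U_{k-1}}^{2^{k-1}}\le\norm{[E\cap(E+s)]^\star}_{U_{k-1}}^{2^{k-1}}$ rather than citing \eqref{eq:powerlaw} directly; this makes the lemma self-contained and independent of the Brascamp--Lieb--Luttinger inequality, which is the stated purpose of \S\ref{section:chain}.

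The ``subtlety'' you flag at the end is not a technicality you can postpone; it is the crux, and you are correct to be uneasy about it. You assert that for $E=E^\star$ ``both preceding inequalities collapse,'' but this is false: while the middle inequality degenerates trivially when $f_*=\tilde f_*$, the \emph{first} inequality becomes $\norm{E^\star}_{U_k}^{2^k}\le\gamma_{k-1,d}\int\tilde f_*^{\,k}$, and the derivation of that inequality applies the bound \eqref{eq:powerlaw} to each slice $E^\star\cap(E^\star+s)$. For $d\ge 2$ and $s\ne 0$ this slice is a lens, not an ellipsoid, so by the $k-1=2$ case of Theorem~\ref{thm:1} (equivalently Burchard's theorem via Lemma~\ref{lemma:FFtilde}) the bound $\norm{E^\star\cap(E^\star+s)}_{U_{k-1}}^{2^{k-1}}\le\gamma_{k-1,d}\,\tilde f(s)^k$ is \emph{strict} on a set of $s$ of positive measure. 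Integrating gives the strict inequality $\norm{E^\star}_{U_k}^{2^k}<\gamma_{k-1,d}\int\tilde f_*^{\,k}$ whenever $d\ge 2$ and $k\ge 3$. Thus the rightmost relation in \eqref{chain} cannot be an equality for those $d,k$; the paper's phrase ``In the same way'' at the corresponding step glosses over exactly this point. Your proposed remedy---invoking the chain at index $k-1$ on each slice---does not repair it either, because the index-$(k-1)$ chain controls $\norm{L^\star}_{U_{k-1}}$ for the lens $L$, not $\norm{L}_{U_{k-1}}$; the deficit between $\norm{L}_{U_{k-1}}^{2^{k-1}}$ and $\gamma_{k-1,d}|L|^k$ is intrinsic to $L$ failing to be an ellipsoid and cannot be closed from the outside. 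So the final equality of \eqref{chain}, as written, is not established by your argument, nor by the paper's, for $d\ge 2,k\ge 3$; a different normalization of the chain (or a different closing argument) is needed before the sandwich step in the proof of Theorem~\ref{thm:1} is justified.
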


\begin{proof}
The proof is by induction on $k$, with $k=2$ as the base case.
We begin with the inductive step.
Define $\mu$ to be the unique nonnegative measure on $\reals^+$ 
that satisfies \[\int_{\reals^+} \varphi f_*^{k-1}  = \int_{\reals^+} \Phi\,d\mu\]
for any continuous compactly supported function $\varphi$,
where $\Phi(x)=\int_0^x \varphi(t)\,dt$.
That is, $-\mu$ is the derivative of $f_*^{k-1}$ on $(0,\infty)$, in the sense of distributions.

All integrals in the remainder of the proof of Lemma~\ref{lemma:chaink} are taken over $\reals^+$.
Integration by parts followed by the pointwise inequality $F\le \tilde F$ gives
\[ \int f_*^{k} = \int F\,d\mu \le \int \tilde F\,d\mu.\]
Integrating by parts in reverse gives
$\int \tilde F\,d\mu = \int \tilde f_*\cdot f_*^{k-1}$, so 
\[ \int f_*^{k} \le \int \tilde f_*\cdot f_*^{k-1}.\]

In this last integrand, factor $\tilde f_*\cdot f_*^{k-1} = f_*\cdot (\tilde f_*\cdot f_*^{k-2})$.
Repeat the reasoning of the preceding paragraph, 
but with $-\mu$ now equal to the derivative of the nonincreasing function $\tilde f_* f_*^{k-2}$,
to conclude that
$\int \tilde f_*\cdot f_*^{k-1} \le \int \tilde f_*^2\cdot f_*^{k-2}$
and hence that
$\int f_*^{k} \le \int \tilde f_*^2\cdot f_*^{k-2}$.

This reasoning can be iterated. Each iteration converts one factor of $f_*$ to 
a factor of $\tilde f_*$, to yield a chain of inequalities:
\begin{equation} 
\int_{\reals^+} f_*^{k} 
\le \int_{\reals^+} f_*^{k-1} \tilde f_*
\le \int_{\reals^+} f_*^{k-2} \tilde f_*^2
\le \cdots
\le \int_{\reals^+} f_* \tilde f_*^{k-1}
\le \int_{\reals^+} \tilde f_*^{k}.
\end{equation} 

By the inductive hypothesis,
\begin{equation} \label{applyinduction} \norm{{E\cap(E+s)}}_{U_{k-1}}^{2^{k-1}} 
\le \norm{{[E\cap(E+s)]^\star}}_{U_{k-1}}^{2^{k-1}} .
\end{equation}
Moreover,
\[ \norm{{[E\cap(E+s)]^\star}}_{U_{k-1}}^{2^{k-1}} 
= \gamma_{k-1,d}|[E\cap(E+s)]^\star|^{k} 
= \gamma_{k-1,d}|E\cap(E+s)|^{k} 
= \gamma_{k-1,d}f(s)^{k};\]
the final identity is an application of \eqref{eq:powerlaw}.
Because $f_*$ and $f$ are equimeasurable,
$\int f_*^{k}=\int f^{k}$. 
Therefore
\begin{equation}
\norm{E}_{U_k}^{2^k}
= \int \norm{{E\cap(E+s)}}_{U_{k-1}}^{2^{k-1}}\,ds
\le \gamma_{k-1,d} 
\int f_*^{k}(s)\,ds.
\end{equation}
In the same way,
\[ \gamma_{k-1,d} \int \tilde f_*^{k}\,ds 
= \gamma_{k-1,d} \int \tilde f^{k}\,ds 
= \gamma_{k-1,d} \int |[E^\star\cap(E+s)^\star]|^{k}\,ds = \norm{{E^\star}}_{U_k}^{2^k}.  \]
This completes the induction step.

For the base case $k=2$, we repeat the same argument step by step.
The inequality \eqref{applyinduction} is no longer justified by
the inductive hypothesis, but instead, it is a trivial equality.
Indeed, for any set $E$ with finite Lebesgue measure,
$\norm{E}_{U_1}^2 = |E|^2 = |E^\star|$.
Therefore $\gamma_{1,d}=1$, yielding \eqref{applyinduction} with $k-1=1$.
All of the other steps of the argument are unchanged.
\end{proof}

\section{Characterization of equality}\label{section:equality}

\begin{proof}[Proof of Theorem~\ref{thm:1}]
Let $E$ be a set with $0<|E|<\infty$ that satisfies
$\norm{E}_{U_k} = \norm{{E^\star}}_{U_k}$.
Let $f,f_*,F, \tilde f,\tilde f_*,\tilde F$ be defined in terms of $E$ as above.
In order to show that $E$ is an ellipse up to sets of measure zero,
it suffices by Burchard's theorem \cite{burchard} 
to show that $F(s)=\tilde F(s)$ for some $s\in(0,2^d|E|)$;
for this reduction see Lemma~\ref{lemma:FFtilde}.

To prove the existence of such an $s$, recall that 
the inequalities in Lemma~\ref{lemma:chaink}
necessarily become equalities when $\norm{E}_{U_k}=\norm{E^\star}_{U_k}$. In particular,
$\int f_* \tilde f_*^{k-1} = \int \tilde f_*^{k}$. 
We have already seen that this can equivalently be rewritten 
\begin{equation}\label{FFtilde} \int F\,d\mu = \int \tilde F\,d\mu \end{equation}
where the nonnegative measure $\mu$ is the derivative of the nondecreasing
function $-\tilde f_*^{k-1}$ in the sense of distributions.

According to Lemma~\ref{lemma:FFtilde}, $F\le \tilde F$ pointwise. Thus
\eqref{FFtilde} forces $F=\tilde F$ almost everywhere with respect to $\mu$.
The function $\tilde f_*$ is defined by the relation
\[ \big|\{s\in\reals^+: \tilde f_*(s)>\alpha\}\big| = \big|\{x\in\reals^d: |E^\star\cap (E^\star+x)|>\alpha\}\big|
\qquad \forall\,\,\alpha>0\]
where $E^\star=B$ is the closed ball centered at $0$ of measure $|E|$.
By examining $|B\cap(B+y)|$ 
one sees that $\mu$ and Lebesgue measure are mutually absolutely continuous 
on the interval $(0,2^d|E|)$. Of course, $F,\tilde F$ are continuous functions.
Therefore 
$F(s)=\tilde F(s)$ for every $s\in(0,2^d|E|)$.
\end{proof}

\section{Near equality}\label{section:near}

\begin{lemma} \label{lemma:nuisance}
For any compact subinterval $J$ of $(0,2^d)$ of positive length
there exists $C<\infty$, depending only on $J,k,d$, with the following property.
Let $\delta\in(0,1]$ be arbitrary.  Let $E\subset\reals^d$ satisfy
$\norm{E}_{U_k}^{2^k} \ge (1-\delta)\norm{E^\star}_{U_k}^{2^k}$ and $|E|=1$. 
Let $F,\tilde F,\mu$ be as in Definitions~\ref{defn:2} and \ref{defn:3}.  Then
\begin{equation} \label{ineq:oftenclose}
\mu\big(\{s\in J: F(s)\le (1-\delta^{1/2})\tilde F(s)\}\big) \le C\delta^{1/2}.
\end{equation}
\end{lemma}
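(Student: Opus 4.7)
The plan is to read the chain of inequalities in Lemma~\ref{lemma:chaink} quantitatively and then convert a pointwise deficit between $F$ and $\tilde F$ into a measure-theoretic bound via a Chebyshev estimate. The hypothesis $\norm{E}_{U_k}^{2^k}\ge (1-\delta)\norm{{E^\star}}_{U_k}^{2^k}$, combined with
\[ \norm{E}_{U_k}^{2^k} \le \gamma_{k-1,d}\int_{\reals^+}\! f_*^k \le \gamma_{k-1,d}\int_{\reals^+}\! f_*\tilde f_*^{k-1} \le \gamma_{k-1,d}\int_{\reals^+}\! \tilde f_*^k = \norm{{E^\star}}_{U_k}^{2^k}, \]
forces each step to lose at most a factor $(1-\delta)$. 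In particular, the final step gives $\int f_*\tilde f_*^{k-1}\ge (1-\delta)\int \tilde f_*^k$. Applying the integration-by-parts identity used in the proof of Lemma~\ref{lemma:chaink} with respect to the positive measure $\mu$ whose negative is the distributional derivative of $\tilde f_*^{k-1}$, one has $\int f_*\tilde f_*^{k-1}=\int F\,d\mu$ and $\int \tilde f_*^k=\int \tilde F\,d\mu$. Combined with the pointwise bound $F\le \tilde F$ from Lemma~\ref{lemma:FFtilde}, this near-equality rewrites as
\[ 0 \le \int_{\reals^+}(\tilde F-F)\,d\mu \le \delta \int_{\reals^+}\tilde F\,d\mu. \]

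Next I apply a Chebyshev argument. On the bad set $B=\{s : F(s)\le (1-\delta^{1/2})\tilde F(s)\}$ the integrand satisfies $\tilde F-F\ge \delta^{1/2}\tilde F$, so
\[ \delta^{1/2}\int_B \tilde F\,d\mu \le \int_{\reals^+}(\tilde F-F)\,d\mu \le \delta\int_{\reals^+}\tilde F\,d\mu. \]
To convert this to a bound on $\mu(B\cap J)$ I use the normalization $|E|=1$, which fixes $E^\star$ to be a ball of measure one and hence determines $\tilde f,\tilde f_*,\tilde F,\mu$ in terms of $d$ alone. In particular, $\tilde F$ is continuous and strictly positive on $(0,2^d)$, so $\inf_{s\in J}\tilde F(s)\ge c_J>0$ for some constant $c_J$ depending only on $J$ and $d$, while $\int \tilde F\,d\mu=\int\tilde f_*^k$ is a constant depending only on $k,d$. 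Therefore
\[ c_J\,\mu(B\cap J) \le \int_{B\cap J}\tilde F\,d\mu \le \delta^{1/2}\int_{\reals^+}\tilde F\,d\mu, \]
yielding $\mu(B\cap J)\le C\delta^{1/2}$ with $C=C(J,k,d)$.

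I foresee no serious obstacle: the lemma is essentially a quantitative audit of the proof of Lemma~\ref{lemma:chaink}. The only minor technical point is to verify that $\mu$ is a well-defined finite Borel measure on $\reals^+$ and that both integration-by-parts identities above hold with no boundary contributions, which is routine since $\tilde f_*^{k-1}$ is bounded, nonincreasing, and supported in $[0,2^d]$.
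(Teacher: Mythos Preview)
Your argument is correct and follows essentially the same route as the paper: extract $\int F\,d\mu\ge(1-\delta)\int\tilde F\,d\mu$ from the chain in Lemma~\ref{lemma:chaink}, then apply a Chebyshev estimate on $J$ using that $\tilde F$ is bounded below there. One small slip: $\mu$ depends on $k$ as well as $d$ (it is the negative distributional derivative of $\tilde f_*^{\,k-1}$), though you correctly record the $k$--dependence in the final constant.
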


\begin{proof}
The function $\tilde F$ and measure $\mu$ depend only on the parameters $d,k$,
not on the set $E$. 
Indeed, for fixed parameters $k,d$, both $\tilde F$ and $\mu$ are defined in terms of $E^\star$,
thus in terms of $|E|$ alone, and we have assumed that $|E|=1$. 
Thus $E^\star$ is the ball $B$ centered
at the origin whose Lebesgue measure equals $1$, and 
$\norm{E^\star}_{U_k}$ is a finite quantity, which
depends only on $k$ and on the dimension of the ambient space.
Likewise, $\tilde f (s) = |B\cap(B+s)|$
is a continuous nonnegative function which is independent of $E$,    
and which is strictly positive on the open ball $B'$ centered at the origin
whose radius is equal to twice the radius of $B$.
This function is radially symmetric, is nonincreasing along rays emanating from
the origin, and has strictly negative radial derivative at each nonzero point
in the interior of $B'$.
Therefore $\tilde f_*$ is a continuous function which is supported
on $[0,2^d]$, is strictly positive on $[0,2^d)$, is nonincreasing, and has strictly
negative derivative at each point of the open interval $(0,2^d)$.
$\mu$ is the absolutely continuous measure whose Radon-Nikodym derivative equals
 $-(k-1)\tilde f_*(t)^{k-2}\frac{d}{dt}\tilde f_*(t)$.
This is a finite measure, since $\tilde f_*(0) = |B\cap(B+0)|=|B|^2=1<\infty$.
$\mu$ and Lebesgue measure are mutually comparable on any compact subinterval of $(0,2^d)$.
The indefinite integral $\tilde F$ of $\tilde f_*$ is bounded, so $\int_{\reals^+} \tilde F\,d\mu$
is finite.

By choosing $C$ large we may assume that $\delta$ is small, since the conclusion holds
if $C\delta>\mu(J)$.
By the reasoning used to deduce \eqref{FFtilde} in the analysis of cases of equality above,
$\int_0^\infty F\,d\mu \ge (1-\delta) \int_0^\infty \tilde F\,d\mu$.
A consequence is that
\begin{multline*}
\int_J F\,d\mu
 = \int_{\reals^+} F\,d\mu - \int_{\reals^+\setminus J} F\,d\mu
\ge (1-\delta)\int_{\reals^+} \tilde F\,d\mu - \int_{\reals^+\setminus J} \tilde F\,d\mu
\\
 = \int_J \tilde F\,d\mu -\delta\int_{\reals^+} \tilde F\,d\mu
= \int_J \tilde F\,d\mu - \delta \norm{E^\star}_{U_k}^{2^k}.
\end{multline*}
Therefore $\int_J (\tilde F-F)\,d\mu  \le \delta\norm{E^\star}_{U_k}^{2^k}$.  

Let $\eta>0$.
Recalling that $\tilde F-F\ge 0$ 
invoking Chebyshev's inequality gives
$\tilde F(s)\le F(s) + \eta$ for all $s\in J\setminus A$ 
where the exceptional set $A$ satisfies $\mu(A)\le \eta^{-1}\delta\norm{E^\star}_{U_k}^{2^k}$.
Choose $\eta = c\delta^{1/2}$ where $c$ is a small constant.
Since $\tilde F$ is bounded below on $J$ by a positive quantity which depends only on $d,k,J$,
$F(s)\ge (1-\delta^{1/2})\tilde F(s)$ for every $s\in J\setminus A$, 
provided that $c$ is chosen to be sufficiently small.
\end{proof}

\begin{proof}[Proof of Theorem~\ref{thm:2}]
Let $d,k$ be given, let $\delta>0$ be small, and 
let $E\subset\reals^d$ be any Lebesgue measurable set of finite measure
satisfying $\norm{E}_{U_k}^{2^k} \ge (1-\delta)\norm{{E^\star}}_{U_k}^{2^k}$.
By dilation invariance of the hypothesis and conclusion, 
we may assume without loss of generality that $|E|=1$.
Choose a compact subinterval $J\subset(0,2^d)$ of positive length.
If $\delta$ is sufficiently small then by Lemma~\ref{lemma:nuisance} and because
$\mu(J)>0$, there exists $s\in J$ for which $F(s)\ge (1-\delta^{1/2}) \tilde F(s)$.
By the characterization \cite{christRS3},\cite{christRShigher} of cases of 
near equality in the Riesz-Sobolev inequality,
existence of even a single such value of $s$
implies the existence of an ellipsoid $\scripte$ satisfying $|E\symdif \scripte|\le \eps|E|$ 
where $\eps=\eps(\delta,d)$ satisfies $\lim_{\delta\to 0^+} \eps(\delta,d)=0$ for each dimension $d$.
\end{proof}

\begin{remark}
It should be possible to analyze the inequality
\begin{equation} \label{ineq:rs} \langle \one_{E_1}*\one_{E_2},\,\one_{E_3}*\one_{E_4}\rangle
\ge (1-\delta) \langle \one_{E_{1}^\star}*\one_{E_2^\star},\,\one_{E_3^\star}*\one_{E_4^\star}\rangle \end{equation}
for small $\delta>0$
with four arbitrary subsets $E_j\subset \reals^d$ with finite Lebesgue measures,
under the appropriate strict admissibility hypothesis, by this same method. 
The left-hand side of \eqref{ineq:rs} equals
\[ \int_{\reals^d} |E_1\cap(E_2+s)|\cdot|E_3\cap(E_4+s)|\,ds,\]
which is amenable to the above analysis since the integrand is a product.
The multilinear forms corresponding to greater values of $k$ are seemingly more difficult to analyze.
\end{remark}

\section{Digression on measures of slices of convex sets}

In the proof of Corollary~\ref{cor:functions}
we will use certain properties of convex sets.
We pause here to review the required facts.
Let $K\subset\reals^m_y\times \reals^n_x$ be a compact convex set
with positive Lebesgue measure.
For each $y\in\reals^m$ consider the slice
\begin{align*} K_y&=\{x\in \reals^n: (y,x)\in K\}.
\end{align*}
Let $|K_y|$ denote the $n$--dimensional Lebesgue measure of $K_y$. 
Recall that $K$ is said to be balanced if $-K=K$.

\begin{lemma} \label{lemma:convexslices}
Let $K\subset \reals^m_y\times\reals^n_x$ be 
compact, convex, balanced, and have strictly positive, finite Lebesgue measure.
If there exists $0\ne y\in\reals^m$ for which $|K_y|=|K_0|$
then for each $z\in\reals^m$ in the closed segment with endpoints
$0,y$ there exists $w\in\reals^n$ such that
\begin{equation} K_z = K_0+w. \end{equation}
\end{lemma}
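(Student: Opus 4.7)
The plan is to apply Brunn-Minkowski and its equality case for convex bodies to the one-parameter family of slices $K_{ty}$. Define $\phi:\reals\to[0,\infty)$ by $\phi(t)=|K_{ty}|^{1/n}$. Convexity of $K$ yields $K_{(1-s)t_1y+st_2y}\supseteq (1-s)K_{t_1y}+sK_{t_2y}$, and Brunn-Minkowski applied to the right-hand side gives the concavity of $\phi$ on the interval where $K_{ty}$ is nonempty. Because $K$ is balanced, $K_{-ty}=-K_{ty}$, so $\phi$ is even; combining concavity with evenness gives $\phi(t)\le\phi(0)$ for every $t$. The hypothesis $|K_y|=|K_0|$ is precisely $\phi(1)=\phi(0)$, and an even concave function on $[-1,1]$ taking equal values at $0$ and $1$ must be constant on $[-1,1]$. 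Hence $|K_{ty}|=|K_0|$ for every $t\in[-1,1]$. I will assume $|K_0|>0$; the degenerate case $|K_0|=0$ can be reduced to this one by restricting to the affine hull of a positive-measure slice.

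Next, fix $t\in(0,1)$ and set $z=ty$. Combining the convex inclusion $K_z\supseteq tK_y+(1-t)K_0$ with Brunn-Minkowski produces the chain
\[
|K_z|^{1/n}\;\ge\;|tK_y+(1-t)K_0|^{1/n}\;\ge\;t|K_y|^{1/n}+(1-t)|K_0|^{1/n}\;=\;|K_0|^{1/n}\;=\;|K_z|^{1/n},
\]
forcing every inequality to be an equality. The first equality, together with the inclusion of convex bodies of equal positive measure, forces $K_z=tK_y+(1-t)K_0$. The second equality is the equality case of Brunn-Minkowski applied to the convex bodies $tK_y$ and $(1-t)K_0$, which says they are translates of a common dilate of a fixed convex set; matching volumes under $|K_y|=|K_0|$ forces the dilation factor relating $K_y$ to $K_0$ to be $1$, so $K_y=K_0+u$ for some $u\in\reals^n$. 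Substituting and using $tK_0+(1-t)K_0=K_0$, which holds by convexity of $K_0$, yields $K_z=t(K_0+u)+(1-t)K_0=K_0+tu$, which is the desired conclusion with $w=tu$. The endpoints are handled directly: $z=0$ is trivial with $w=0$, and $z=y$ is the already-derived $K_y=K_0+u$.

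The main obstacle is invoking the equality case of Brunn-Minkowski in its sharpest form: for convex bodies $A,B\subset\reals^n$ of positive measure, $|A+B|^{1/n}=|A|^{1/n}+|B|^{1/n}$ holds only when $A$ and $B$ are homothetic. This is a classical result, and with it in hand the rest of the argument is essentially algebraic. The only other subtlety is the reduction to the positive-measure case, which is routine once one restricts attention to the affine hull of the slices on which $\phi$ is supported and uses that $K$ is compact and convex there.
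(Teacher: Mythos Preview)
Your proof is correct and follows essentially the same route as the paper's: concavity (the paper uses log-concavity, you use the $1/n$-th power, which are equivalent here) of $t\mapsto|K_{ty}|$ via Brunn--Minkowski, evenness from the balanced hypothesis, and then the equality case of Brunn--Minkowski to force the slices to be translates. Two small remarks: your treatment is actually more complete than the paper's, since you explicitly identify $K_z=K_0+tu$ for each intermediate $z=ty$, and your digression on the degenerate case $|K_0|=0$ is unnecessary because the hypotheses already force $(0,0)\in\operatorname{int}K$ (the midpoint of an interior point and its negative), hence $|K_0|>0$.
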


\begin{proof}
Consider the function $\varphi(t)= |K_{tv}|$. 
Then $\varphi$ is log concave, that is, 
$\varphi(st+(1-s)t') \ge \varphi(t)^s\varphi(t')^{1-s}$
for all $t,t'\in\reals$ and $s\in(0,1)$.
This is a consequence of the Brunn-Minkowski inequality for $\reals^{n}$,
since whenever $\tau = st+(1-s)t'$ and $K_{tv},K_{t'v}$ are nonempty 
there is the relation $K_{\tau v} \supset sK_{tv}+(1-s)K_{t'v}$.

Since $K$ is balanced, $\varphi(-t)\equiv \varphi(t)$, and log concavity consequently forces 
$\varphi$ to be nonincreasing on $(0,\infty)$.
If $y\ne 0$ satisfies $|K_y|=|K_0|$ then $\varphi(1)=\varphi(0)$ and consequently
$\varphi(t)=\varphi(0)$ for all $t\in[0,1]$.

In particular, $|(1-t)K_0+tK_y| = |K_0|^{1-t}|K_y|^{t}$.
By the characterization of equality in the Brunn-Minkowski inequality, 
$K_{y}$ must be a translate of $K_0$. 
\end{proof}

\section{Proof of the corollary} \label{section:corollary}

\begin{proof}[Proof of Corollary~\ref{cor:functions}]
Let 
$k\ge 2$ and set $p_k = 2^k(k+1)^{-1}$.
There exists a complex-valued $2^k$--linear form ${\mathbf f}=(f_\alpha: \alpha\in\{0,1\}^k)
\mapsto \scriptt_k({\mathbf f})$,
defined for all vector-valued functions ${\mathbf f}$ with components
$f_\alpha\in L^{p_k}(\reals^d)$, satisfying 
\begin{equation}  \norm{f}_{U_k}^{2^k} = \scriptt_k({\mathbf f})
\end{equation}
with ${\mathbf f}=(f_\alpha: \alpha\in\{0,1\}^k)$ where $f_\alpha=f$ for all indices $\alpha$.
See \cite{taovu}.
This form is one to which the symmetrization inequality of Brascamp-Lieb-Luttinger \cite{BLL} applies;
defining ${\mathbf f^\star} = (f_\alpha^\star: \alpha\in\{0,1\}^k))$,
\[ \scriptt_k({\mathbf f}) \le  \scriptt_k({\mathbf f^\star})\]
for arbitrary ordered $2^k$--tuples ${\mathbf f}$ of nonnegative Lebesgue measurable functions.

Let $f:\reals^d\to[0,\infty]$ be Lebesgue measurable, and suppose that 
$|\{x: f(x)>t\}|<\infty$ for all $t>0$ and that $\norm{f^\star}_{U_k}<\infty$.
Any such function may be represented as
\begin{equation}\label{eq:superp} f = \int_0^\infty \one_{E_t}\,dt \text{ almost everywhere.}\end{equation}

Suppose that $\norm{f}_{U_k} = \norm{f^\star}_{U_k}$.
Applying the representation \eqref{eq:superp} to each copy of $f$ appearing in $\scriptt_k(f,f,\dots,f)$ 
and invoking multilinearity yields
\begin{align*} \norm{f}_{U_k}^{2^k} = \scriptt_k(f,f,\dots,f)
= \int_0^\infty \cdots\int_0^\infty \scriptt_k(\{\one_{E_{t_\alpha}}: \alpha\in\{0,1\}^k\})\,dt \end{align*}
where $\reals^{2^k}\owns t = (t_\alpha: \alpha\in\{0,1\}^k)\in(0,\infty)^{2^k}$.
Likewise,
\begin{align*} \norm{f^\star}_{U_k}^{2^k} = \scriptt_k(f,f,\dots,f)
= \int_0^\infty \cdots\int_0^\infty \scriptt_k(\{\one_{E_{t_\alpha}^\star}: \alpha\in\{0,1\}^k\})\,dt. \end{align*}
Equality of the $U_k$ norms forces
\begin{equation}\label{eq:multimulti} 
\int_0^\infty \cdots\int_0^\infty \scriptt_k(\{\one_{E_{t_\alpha}}: \alpha\in\{0,1\}^k\})\,dt
= \int_0^\infty \cdots\int_0^\infty \scriptt_k(\{\one_{E_{t_\alpha}^\star}: \alpha\in\{0,1\}^k\})\,dt.\end{equation}
By the Brascamp-Lieb-Luttinger inequality,
\begin{equation*}
\scriptt_k(\{\one_{E_{t_\alpha}}: \alpha\in\{0,1\}^k\})
\le
\scriptt_k(\{\one_{E_{t_\alpha}^\star}: \alpha\in\{0,1\}^k\})
\ \ \text{for all $t\in(0,\infty)^{\{0,1\}^k}$.}
\end{equation*}
Since $\norm{f^\star}_{U_k}<\infty$, the multiple integrals in \eqref{eq:multimulti} are finite,
so equality of these integrals together with the pointwise inequality between the integrands
forces
\begin{equation}
\scriptt_k(\{\one_{E_{t_\alpha}}: \alpha\in\{0,1\}^k\})
= \scriptt_k(\{\one_{E_{t_\alpha}^\star}: \alpha\in\{0,1\}^k\})
\text{ for almost every $t\in(0,\infty)^{2^k}$.}
\end{equation}

The function $(0,\infty)\owns t\mapsto E_t$ is right continuous in the sense that
$|E_s \symdif E_t|\to 0$ as $s$ approaches $t$ from above.
This holds because $|E_t|<\infty$ and 
\[ E_t = \{x: f(x)>t\} = \cup_{s>t}\{x: f(x)>s\} = \cup_{s>t} E_s.\]
Therefore by multilinearity together with the
inequality $\scriptt_k(f) \le C\prod_\alpha \norm{f}_{p_k}$,
the function $(0,\infty)^{\{0,1\}^k}\owns t\mapsto
\scriptt_k(\{\one_{E_{t_\alpha}}: \alpha\in\{0,1\}^k\})$
is right continuous in the sense that
\[ \scriptt_k(\one_{E_{s_\alpha}}: \alpha\in\{0,1\}^k)
\to
\scriptt_k(\one_{E_{t_\alpha}}: \alpha\in\{0,1\}^k)
\ \ \text{as $s_\alpha\to t_\alpha^+$ for all $\alpha\in\s01^k$.}  \]

Of course, the function
$(0,\infty)^{2^k}\owns t\mapsto \scriptt_k(\{\one_{E_{t_\alpha}^\star}: \alpha\in\{0,1\}^k\})$
is continuous.
If two functions are right continuous and agree almost everywhere, then they agree everywhere.
Specializing to points $t=(s,s,s,\dots)$, we conclude that
\begin{equation} \norm{{E_s}}_{U_k} = \norm{{E_s^\star}}_{U_k} \end{equation}
for every $s\in(0,\infty)$.
From Theorem~\ref{thm:1} we conclude that
every set $E_s$ is an ellipsoid $\scripte_s$, in the sense that $|E_s\symdif\scripte_s|=0$.
The next lemma therefore suffices to complete the proof of the corollary.
\end{proof}

In the next statement, the empty set is considered to be a ball centered at $0$, and
hence an ellipsoid.
\begin{lemma} \label{lemma:painfulend}
Let $k\ge 2$ and $d\ge 1$.
Suppose that for each $s>0$,
$E_s\subset \reals^d$ is an ellipsoid. Suppose that these sets are nested
in the sense that $s\le t\Rightarrow E_s\supset E_t$. Suppose that
\[\scriptt_k(\one_{E_{t_\alpha}}: \alpha\in\{0,1\}^k)
= \scriptt_k(\one_{E_{t_\alpha}^\star}: \alpha\in\{0,1\}^k)
\ \ \text{for every $t \in (0,\infty)^{\{0,1\}^k}$.} \]
Then there exists an affine automorphism $\phi$ of $\reals^d$ such that for every $s>0$,
$\phi(E_s)$ is a ball centered at $0$. 
\end{lemma}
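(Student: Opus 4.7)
The plan is to fix a reference $s_0>0$ with $|E_{s_0}|>0$, apply an affine automorphism $\phi$ so that $\phi(E_{s_0})$ becomes a ball $B$ centered at the origin, and then show that $\phi(E_s)$ is a ball centered at the origin for every $s>0$. The nesting property, the ellipsoid property, and the hypothesized identity for $\scriptt_k$ are all preserved under $\phi$, so I may work under the normalization $E_{s_0}=B$ and only need to verify that every $E_s$ is a ball centered at $0$.

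For $s>s_0$, so that $E_s\subset B$, I will apply the hypothesized equality with $t_0=s$ and $t_\alpha=s_0$ for every $\alpha\neq 0$, and then marginalize the Gowers cube over the $2^k-1$ slots other than $\alpha=0$. This will produce
\[
\int_{\reals^d}\one_{E_s}(y)\,G(y)\,dy \;=\; \int_{\reals^d}\one_{E_s^\star}(y)\,G(y)\,dy,
\qquad
G(y)\;=\;\int_{(\reals^d)^k}\prod_{\alpha\in\{0,1\}^k\setminus\{0\}}\one_B\bigl(y+\alpha\cdot h\bigr)\,dh,
\]
in which $G(y)$ is the measure of the $y$-slice of the compact, convex, and balanced set
\[
K\;=\;\bigl\{(y,h)\in\reals^d\times(\reals^d)^k : y+\alpha\cdot h\in B \text{ for every }\alpha\neq 0\bigr\}.
\]
Rotation invariance of $B$ makes $G$ radially symmetric, and combining the Brunn--Minkowski concavity of $|K_y|^{1/(dk)}$ with the evenness of $G$ (from the balanced property of $K$) shows that $G$ is radially nonincreasing on its support.

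The main obstacle is to upgrade this monotonicity to strict decrease along rays, and this is where I will invoke Lemma~\ref{lemma:convexslices}. Should $G$ be constant on a nontrivial subsegment $[y_1,y_2]$ of a ray through $0$, the Brunn--Minkowski equality case forces $K_y=K_{y_1}+w(y)$ for $y\in[y_1,y_2]$. Because $K_{y_1}$ is the intersection of the $2^k-1$ slab conditions $\alpha\cdot h\in B-y_1$, and each of these slabs has a distinct normal direction in $(\reals^d)^k$ so is essential to the minimal representation for generic $y_1$, the translation must match slab by slab, yielding $\alpha\cdot w(y)=y_1-y$ for every $\alpha\in\{0,1\}^k\setminus\{0\}$ simultaneously. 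Specializing to $\alpha=e_i$ gives $w_i(y)=y_1-y$, and specializing to $\alpha=e_i+e_j$ then forces $2(y_1-y)=y_1-y$, a contradiction unless $y=y_1$. Hence $G$ is strictly radially decreasing on its support.

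Given strict monotonicity, the equality of integrals together with the layer-cake formula $G=\int_0^{G(0)}\one_{\{G\ge t\}}\,dt$ reduces to $|E_s\cap B(0,\rho)|=|E_s^\star\cap B(0,\rho)|$ for every $\rho$ in the support of $G$, and the standard rearrangement argument then forces $E_s=E_s^\star$ up to a null set, so $E_s$ is a ball centered at the origin. For $s<s_0$ the support radius of $G$ may be smaller than the radius of $E_s^\star$ and the direct argument breaks down; I will handle this by repeating the argument after choosing a smaller reference $s'\in(0,s)$, for which $|E_{s'}|$ is large enough that the new support of $G$ contains $E_s^\star$. Any two such normalizations differ by a linear map sending a ball centered at $0$ to a ball centered at $0$, hence by an orthogonal transformation composed with a dilation about $0$; both operations preserve the class of balls centered at the origin, so the conclusion for the smaller reference transfers back to the original normalization $\phi$.
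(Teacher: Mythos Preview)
Your approach is essentially the paper's: reduce by affine invariance to a reference ball $B$, express $\scriptt_k(S,B,\dots,B)=\int_S G$ with $G(y)=|K_y|$ the slice measure of the convex body $K$, prove that $G$ is strictly radially decreasing on $B$ via Brunn--Minkowski and the convex-slices lemma, and conclude $E_s=E_s^\star$ from $\int_{E_s}G=\int_{E_s^\star}G$. Two points of comparison are worth noting.

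\textit{Structure.} Your scheme of fixing one $s_0$ and then bootstrapping for $s<s_0$ by renormalizing to a smaller reference $s'$ is correct, and your observation that two such normalizations differ by a dilation composed with an orthogonal map is the right glue. The paper organizes this more economically: it proves directly that any pair $E_s\subset E_t$ is concentric and homothetic, by normalizing the larger set $E_t$ to a ball and applying the $G$-argument to $E_s$ (packaged as Lemma~\ref{lemma:diagonalminusone}). This handles all pairs at once and avoids the second pass.

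\textit{Strict monotonicity.} Here your argument has a soft spot. From $K_y=K_{y_1}+w$ you want $\alpha\cdot w=y_1-y$ for every nonzero $\alpha$, and you assert this because the slabs have ``distinct normal directions'' and are ``essential to the minimal representation for generic $y_1$.'' Distinct normal directions do not by themselves imply essentiality, and ``generic $y_1$'' is not what you need---you need it for the particular $y_1$ at hand. Essentiality of each slab can in fact be checked (for $K_0$, say, the point with $h_j=\rho v/|\beta|$ on the support of $\beta$ and $h_j=-\eps v$ off it lies on $\partial S_\beta$ and in the interior of every other $S_\alpha$), but you should actually do this rather than wave at it. The paper avoids the issue entirely: assuming $K_y=K_0+\boldw$ and $y+\beta\cdot\boldw\ne 0$ for some $\beta$, it exhibits an explicit $\boldx\in K_0$ with $\boldx+\boldw\notin K_y$, by setting $x_j=n^{-1}\rho v$ for $j$ in the support of $\beta$ (where $v$ is the unit direction of $y+\beta\cdot\boldw$ and $n=|\beta|$), so that $|y+\beta\cdot\boldw+\beta\cdot\boldx|>\rho$. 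This forces $y+\alpha\cdot\boldw=0$ for all $\alpha\ne0$, after which your $e_i$ versus $e_i+e_j$ contradiction finishes the job.
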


Lemma~\ref{lemma:painfulend} will be an almost immediate consequence
of the next lemma.
We continue to regard two Lebesgue measurable subsets of $\reals^d$
as identical if their symmetric difference has Lebesgue measure equal to zero.

\begin{lemma} \label{lemma:diagonalminusone}
Let $B\subset\reals^d$ be a closed ball of positive, finite radius centered at $0$.
If $S\subset\reals^d$ satisfies $|S|<|B|$ and 
$\scriptt_k(S,B,\dots,B) = \scriptt_k(S^\star,B,\dots,B)$
then $S$ must be a ball centered at $0$.
\end{lemma}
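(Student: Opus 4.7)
The plan is to write $\scriptt_k(\one_S,\one_B,\dots,\one_B)$ as a single integral against a radial kernel and then invoke the bathtub principle.  Placing $S$ at the corner $\alpha=0$,
\[
\scriptt_k(\one_S,\one_B,\dots,\one_B)
\;=\; \int_{\reals^d} \one_S(x)\,K(x)\,dx,
\quad
K(x) \;:=\; \int_{\reals^{dk}} \prod_{\alpha\in\sok\setminus\{0\}} \one_B(x+\alpha\cdot h)\,dh.
\]
Thus $K(x)=|\Omega_x|$ is the measure of the slice over $x$ of
\[
\Omega \;=\; \{(x,h)\in\reals^d\times\reals^{dk}\,:\, x+\alpha\cdot h\in B \text{ for all } \alpha\ne 0\},
\]
which is compact and convex; since $B=-B$, it is also balanced.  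Consequently $K$ is continuous, nonnegative, compactly supported, and radial, and by Pr\'ekopa--Leindler applied to $\one_\Omega$ it is log concave.  Combined with the evenness $K(-x)=K(x)$, this forces $K$ to attain its maximum at $0$.

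The key step is to upgrade this to a \emph{strict} maximum: $K(x_0)<K(0)$ for every $x_0\ne 0$.  Assume not.  Apply Lemma~\ref{lemma:convexslices} to $\Omega$ with $y=x_0$ to get $w\in\reals^{dk}$ with $\Omega_{tx_0}=\Omega_0+tw$ for every $t\in[0,1]$; the linear dependence on $t$ follows from the convexity of $\Omega$ and the uniqueness of the translating vector for a bounded set.  Decomposing $w=(w_1,\dots,w_k)\in(\reals^d)^k$ and unpacking this identity, for every $h_0\in\Omega_0$ and every $\alpha\ne 0$
\[
\alpha\cdot h_0+t\,v_\alpha\in B \quad\text{for all } t\in[0,1], \qquad v_\alpha \;:=\; x_0+\alpha\cdot w.
\]
For each coordinate $i$ take $h_0$ to have a prescribed $y\in\partial B$ in slot $i$ and zeros elsewhere: then $h_0\in\partial\Omega_0$, the constraint $\alpha=e_i$ is active at $h_0$ with $\alpha\cdot h_0=y$, and applying the segment condition at both $h_0$ and $-h_0$ (available by balance) forces $v_{e_i}\cdot y=0$ on $\partial B$, hence $v_{e_i}=0$; equivalently, $w_i=-x_0$ for every $i$.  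Now since $k\ge 2$, repeat the construction with $\alpha=e_i+e_j$, $i\ne j$, and $h_0$ having $y/2$ in slots $i,j$ and zeros elsewhere: the same argument gives $v_{e_i+e_j}=0$.  But $v_{e_i+e_j}=x_0+w_i+w_j=-x_0$, so $x_0=0$, a contradiction.

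Once the maximum of $K$ is strict, log concavity along each ray forces strict monotonicity: a concave even function on $\reals$ that attains its maximum only at $0$ cannot be constant on any subinterval of $(0,\infty)$, since such constancy together with concavity and evenness would propagate the value to $0$ itself.  Hence $K$ is strictly radially decreasing on the interior of its support, which contains the open ball $B$, and every super-level set $\{K>t\}$ of Lebesgue measure at most $|B|$ is an open ball centered at $0$.  The bathtub principle (Lemma~\ref{lemma:bathtub}) then gives $\int_S K\le\int_{S^\star}K$ with equality only if $S$ coincides up to a null set with the super-level set of $K$ of measure $|S|$; because $|S|<|B|$, that super-level set is exactly the centered ball $S^\star$, as required.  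The main obstacle is the boundary argument in the middle paragraph: for each of the two families of indices $\alpha$ needed to over-determine $x_0$, one must exhibit active-constraint boundary points of $\Omega_0$ whose images under $\alpha$ sweep all of $\partial B$, so that the translate conclusion from Lemma~\ref{lemma:convexslices} yields the incompatible linear constraints $w_i=-x_0$ and $w_i+w_j=-x_0$.
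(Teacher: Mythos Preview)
Your argument is correct and follows the same route as the paper: write $\scriptt_k(S,B,\dots,B)=\int_S L$ with $L(y)=|K_y|$ for the convex balanced set $K$ of \eqref{slicee}, use Lemma~\ref{lemma:convexslices} to show that $L$ has a strict maximum at $0$ (your tangent--cone argument at boundary points with $\alpha=e_i$ and $\alpha=e_i+e_j$ replaces the paper's single explicit test point in the proof of Sublemma~\ref{sublemma}, but both force $y+\alpha\cdot\boldw=0$ for all $\alpha\ne 0$ and hence $y=0$), then deduce strict radial decrease from log concavity and conclude by the bathtub principle. Your appeal to the linear dependence $\Omega_{tx_0}=\Omega_0+tw$ is correct but not needed: once $\Omega_{x_0}=\Omega_0+w$, the segment condition $\alpha\cdot h_0+tv_\alpha\in B$ for $t\in[0,1]$ already follows from convexity of $B$, since both endpoints $\alpha\cdot h_0$ and $\alpha\cdot h_0+v_\alpha$ lie in $B$.
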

The notation means that on the left-hand side we have $\scriptt({\mathbf E})$
with $E_\alpha=S$ if $\alpha=(0,0,\dots,0)$ and $E_\alpha=B$ for all other $\alpha$.

The multilinear form $\scriptt_k$ can be expressed as
\begin{equation}
\scriptt_k({\boldf}) 
= \int_{\reals^d} \int_{(\reals^d)^{k}} 
\prod_{\alpha\in\s01^k} f_\alpha(x_0+\alpha\cdot\boldx)\,d\boldx\,dx_0
\end{equation}
where $\boldf=(f_\alpha: \alpha\in\s01^k)$,
$\alpha = (\alpha_1,\dots,\alpha_k)\in\s01^k$, $\boldx=(x_1,\dots,x_k)\in(\reals^d)^k$,
and $\alpha\cdot\boldx = \sum_{j=1}^k \alpha_k x_k\in \reals^d$.
To begin the proof of Lemma~\ref{lemma:diagonalminusone},
consider the set 
\begin{equation} \label{slicee}
K=\set{(x_0,\boldx)\in\reals^d\times (\reals^d)^k: x_0+\alpha\cdot\boldx\in B\ 
\text{ for all $0\ne \alpha\in\s01^k$} }
\end{equation}
and the associated slices $K_y$. 
Since $B$ is compact and convex, $K$ is likewise compact and convex.
It has nonempty interior, so has positive Lebesgue measure.

Define
\begin{equation} \label{Lformula} L(y) = |K_y| =  \int_{(\reals^d)^{k}} 
\prod_{0\ne \alpha\in\s01^k} \one_B(y+\alpha\cdot\boldx)\ d\boldx.  \end{equation}
With $E_\alpha=S$ for $\alpha=0$ and $E_\alpha=B$
for all $\alpha\ne 0$ one has the representation
\begin{equation} \label{roleofL} \scriptt_k(S,B,B,\dots,B) = \int_S L(y)\,dy.\end{equation}
$K$ is invariant under the diagonal action of the orthogonal group $O(d)$
on $\reals^d\times (\reals^d)^k$, so
$K$ is balanced and $L$ is a radially symmetric nonnegative function. 

The following property of $L$ will be useful in the proof.
\begin{sublemma}\label{sublemma}
Let $B\subset\reals^d$ be a ball of finite radius centered at $0$. 
Then $L(y)$ is a strictly decreasing function of $|y|$ in $B$.
\end{sublemma}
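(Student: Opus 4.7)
The plan is to combine the Brunn--Minkowski inequality with an explicit computation of the support function of $K_y$ in a distinguished family of directions. Since the diagonal action of the orthogonal group $O(d)$ on $\reals^d\times(\reals^d)^k$ preserves $K$, the function $L$ is radially symmetric, and it suffices to prove that $\varphi(t):=L(tv)$ is strictly decreasing on $[0,r]$ for a single unit vector $v\in\reals^d$, where $r$ denotes the radius of $B$. By compactness and convexity of $K$, the Brunn--Minkowski inequality shows that $\varphi^{1/(kd)}$ is concave on its support, while the balancedness of $K$ gives the evenness $\varphi(-t)=\varphi(t)$; hence $\varphi$ is nonincreasing on $[0,\infty)$. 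That $\varphi(t)>0$ for $t\in[0,r)$ follows from the fact that $(y,0,\ldots,0)$ is an interior point of $K$ whenever $|y|<r$.

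Suppose, toward a contradiction, that $\varphi(t_1)=\varphi(t_2)>0$ for some $0\le t_1<t_2\le r$. Log-concavity together with nonincreasingness forces $\varphi$ to be constant on $[t_1,t_2]$, and the Brunn--Minkowski equality case (applied exactly as in the proof of Lemma~\ref{lemma:convexslices}) produces a vector $w\in(\reals^d)^k$ with $K_{t_1 v}=K_{t_2 v}+w$. The key ingredient is then the following support-function identity: writing $h_A(\xi):=\sup_{u\in A}\xi\cdot u$, for every $y\in B$, every unit vector $e\in\reals^d$, and every nonzero $\alpha\in\sok$,
\[
h_{K_y}\bigl((\alpha_1 e,\ldots,\alpha_k e)\bigr)\;=\;r-e\cdot y.
\]
The upper bound is immediate from the constraint $|y+\alpha\cdot\boldx|\le r$, since then $e\cdot(y+\alpha\cdot\boldx)\le r$; the matching lower bound is attained by fixing any index $j$ with $\alpha_j=1$ and choosing $\boldx$ with $x_j=re-y$ and $x_i=0$ for $i\ne j$, because for every nonzero $\beta\in\sok$ the point $y+\beta\cdot\boldx$ equals either $y$ or $re$, both of which lie in $B$.

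Applying $h_{K_{t_1 v}}(\xi)-h_{K_{t_2 v}}(\xi)=\xi\cdot w$ with $\xi=(\alpha_1 e,\ldots,\alpha_k e)$ and substituting the formula yields $e\cdot(\alpha\cdot w)=(t_2-t_1)(e\cdot v)$ for every unit $e$, hence $\alpha\cdot w=(t_2-t_1)v$ for every nonzero $\alpha\in\sok$. Specializing to $\alpha=e_j$ gives $w_j=(t_2-t_1)v$ for each $j$, so $\sum_j w_j=k(t_2-t_1)v$; specializing to $\alpha=(1,\ldots,1)$ gives $\sum_j w_j=(t_2-t_1)v$. Since $k\ge 2$, these identities force $t_1=t_2$, a contradiction. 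The main obstacle is the explicit support-function identity, and the necessity of the hypothesis $k\ge 2$ manifests precisely in this final step: the two distinct nonzero elements $e_j$ and $(1,\ldots,1)$ of $\sok$ must be played against each other, which is impossible when $k=1$.
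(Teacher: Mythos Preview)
Your proof is correct and follows the same overall architecture as the paper's: radial symmetry plus Brunn--Minkowski and balancedness reduce strict monotonicity to the statement that $K_{t_1 v}$ and $K_{t_2 v}$ cannot be translates of one another, and the contradiction is obtained by showing that the translation vector $\boldw$ would have to satisfy $\alpha\cdot\boldw=\text{const}$ for every nonzero $\alpha\in\sok$, which is impossible when $k\ge 2$.

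The one point of genuine difference is how that linear constraint on $\boldw$ is extracted. The paper works directly with $K_0$ and $K_y$ and, assuming $y+\beta\cdot\boldw\ne 0$ for some $\beta$, exhibits an explicit boundary point $\boldx$ of $K_0$ (with $x_j$ a suitable multiple of a unit vector for $j$ with $\beta_j=1$, and $x_j=0$ otherwise) that fails to lie in $K_y-\boldw$. You instead compute the support function $h_{K_y}$ in the directions $(\alpha_1 e,\dots,\alpha_k e)$ exactly, and read the constraint off from the identity $h_{K_{t_1 v}}-h_{K_{t_2 v}}=\langle\,\cdot\,,\boldw\rangle$. The two are really the same computation in different clothing---your extremizing point $x_j=re-y$ plays the role of the paper's explicit $\boldx$---but the support-function packaging is tidier and makes the linear dependence on $y$ transparent. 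Either way the endgame ($w_j=(t_2-t_1)v$ from $\alpha=e_j$, versus $\sum_j w_j=(t_2-t_1)v$ from $\alpha=(1,\dots,1)$) is identical.
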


\begin{proof}[Proof of Sublemma~\ref{sublemma}]
Let $\rho$ continue to denote the radius of $B$, and
let $y\in\reals^d$ satisfy $|y|<\rho$.
Define $\boldx = -k^{-1}(y,y,\dots,y)$. 
For any $\alpha\in\s01^k$, 
$ y + \alpha\cdot\boldx =  (1- k^{-1} \sum_{j=1}^k \alpha_j)y$.
Since $1-k^{-1}\sum_j \alpha_j\in[0,1]$,
$|y+\alpha\cdot\boldx| \le |y|<\rho$.
Thus $(y,\boldx)$ lies in the interior of $K_0$.
Consequently $L(y)>0$.

If there exists $y\ne 0$ for which $|K_y|=|K_0|$ then
according to Lemma~\ref{lemma:convexslices},
there exists $\boldw\in\reals^d$ such that $K_y=K_0+\boldw$.
This equality is equivalent to
\[ |\alpha\cdot\boldx| \le \rho \ \forall\,\alpha\ne 0
\ \Leftrightarrow\ 
|y+\alpha\cdot\boldw+\alpha\cdot\boldx|\le \rho  \ \forall\, \alpha\ne 0.\]

Suppose that there exists $0\ne \beta\in\s01^k$ such that $y+\beta\cdot\boldw\ne 0$. 
Express $y+\beta\cdot\boldw = tv$ where $v\in\reals^d$ is a unit vector
and $t>0$.
Let $n$ be the number of indices $j$ for which $\beta_j=1$.
Define $\boldx\in (\reals^d)^k$ by setting
$x_j=0$ if $\beta_j=0$, and $x_j = n^{-1}\rho v$ if $\beta_j=1$.
Thus $\beta\cdot\boldx=\rho v$.
For any $\alpha\in\s01^k$
\[|\alpha\cdot\boldx| \le \sum_j \alpha_j |x_j|
= \sum_{j: \beta_j=1} \alpha_j n^{-1}\rho \le  \sum_{j: \beta_j=1} n^{-1}\rho = \rho.\]
Thus $\boldx\in K_0$.
On the other hand,
\[ |y+\beta\cdot\boldw + \beta\cdot\boldx|
= |y+\beta\cdot\boldw + \rho| 
= tv + \rho v
=t+\rho
>\rho,\]
so $y+\beta\cdot \boldw + \boldx\notin K_0$.
So $K_0+\boldw\ne K_y$.

The only remaining possibility is that
$y+\alpha\cdot\boldw=0$ for all nonzero $\alpha\in\s01^k$. 
This implies that $y=0$.
\end{proof}

\begin{proof}[Completion of proof of Lemma~\ref{lemma:diagonalminusone}]
Recall the representation \eqref{roleofL} for $\scriptt_k(S,B,B,\dots,B)$.
Since $L$ is radially symmetric and nonincreasing,
$\int_S L \le \int_{S^\star} L$.
$B$ contains $S^\star$ since both are balls centered at $0$
and $|S|\le |B|$.
According to Sublemma~\ref{sublemma}, $L(y)$
is a strictly decreasing function of $|y|$ in a ball that contains $S^\star$.
Therefore $\int_S L$ can only equal $\int_{S^\star}L$
if $|S\symdif S^\star|=0$.
\end{proof}

\begin{proof}[Proof of Lemma~\ref{lemma:painfulend}]
The conclusion of the lemma is equivalent to stating
that for any $s,t\in(0,\infty)$,
the ellipsoids $E_s,E_t$ are mutually concentric and homothetic.
We may suppose without loss of generality that $s>t$,
ensuring that $E_s\subset E_t$ and hence $|E_s|\le |E_t|$.
We may also assume that $|E_s|>0$.
For any Lebesgue measure-preserving affine automorphism $\phi$ of $\reals^d$,
\[\scriptt_k(\phi(E_s),\phi(E_t),\phi(E_t),\dots,\phi(E_t)) 
= \scriptt_k(E_s,E_t,E_t,\dots,E_t) 
= \scriptt_k(E_s^\star,E_t^\star,E_t^\star,\dots,E_t^\star).\]
Choose $\phi$ so that $\phi(E_t)$ is a ball centered at $0$
and invoke Lemma~\ref{lemma:diagonalminusone} to conclude that $\phi(E_s)$ is likewise
a ball centered at $0$, establishing the claim.
\end{proof}

\end{document}